\newcommand{\newjointcountertheorem}[3]{
	\newaliascnt{#1}{#2}
	\newtheorem{#1}[#1]{#3}
	\aliascntresetthe{#1}	
}
\newtheorem*{thm-fusion}{Theorem \ref{thm:central-limit-basic-spec-fusion}}
\newtheorem*{thm-fusion-strings}{Theorem \ref{thm:central-limit-strings-fusion}}
\newtheorem*{cnj-fusion}{Conjecture \ref{cnj:fusion}}
\newtheorem{thm}{Theorem}[section]
\theoremstyle{definition}
\def\Snospace~{\S{}}
\renewcommand{\mathbb}{\mathbf}
\newcommand{\flag}{\mathrm{Fl}}
\newcommand{\supp}{\mathrm{supp}}
\newcommand{\weakto}{\stackrel{\mathrm{w}}{\longrightarrow}}
\newcommand{\qbinom}[2]{\genfrac{[}{]}{0pt}{}{#1}{#2}}
\newcommand{\lra}{\longrightarrow}
\newcommand{\ra}{\rightarrow}
\newcommand{\vx}{\mathbf{x}}
\newcommand{\vj}{\mathbf{j}}
\newcommand{\vc}{\mathbf{c}}
\newcommand{\vf}{\mathbf{f}}
\newcommand{\vu}{\mathbf{u}}
\newcommand{\vp}{\mathbf{p}}
\newcommand{\vv}{\mathbf{v}}
\newcommand{\vw}{\mathbf{w}}
\newcommand{\vk}{\mathbf{k}}
\newcommand{\va}{\mathbf{a}}
\newcommand{\vb}{\mathbf{b}}
\newcommand{\vB}{\mathbf{B}}
\newcommand{\vBn}{\mathbf{B}^N}
\newcommand{\vL}{\mathbf{L}}
\newcommand{\vX}{\mathbf{X}}
\newcommand{\vY}{\mathbf{Y}}
\newcommand{\an}{a^N}
\newcommand{\bn}{b^N}
\newcommand{\jn}{j^N}
\newcommand{\lN}{l^N}
\newcommand{\Bn}{B^N}
\newcommand{\Cn}{C^N}
\newcommand{\en}{D^N}
\newcommand{\Ln}{L^N}
\newcommand{\Jn}{J^N}
\newcommand{\Tn}{T^N}
\newcommand{\Rn}{R^N}
\newcommand{\Yn}{Y^N}
\newcommand{\Xn}{X^N}
\newcommand{\vLn}{\mathbf{L}^N}
\newcommand{\vXn}{\mathbf{X}^N}
\newcommand{\vJ}{\mathbf{J}}
\newcommand{\vJn}{\mathbf{J}^N}
\newcommand{\vjn}{\mathbf{j}^N}
\newcommand{\vnu}{\mathbf{0}}
\newcommand{\Exp}{\mathbb{E}}
\newcommand{\Prob}{\mathbb{P}}
\newcommand{\pas}{\mathbb{P}-a.s.}
\newcommand{\lrad}{\stackrel{d}{\longrightarrow}}
\newcommand{\deq}{\stackrel{d}=}
\newcommand{\var}{{\mathrm{Var}}}
\newcommand{\cov}{{\mathrm{Cov}}}
\newcommand{\diag}{\mathrm{diag}}
\newcommand{\vvn}{\mathbf{v}^N}
\DeclareMathOperator{\Inv}{Inv}
\numberwithin{equation}{section}
\begin{document}

	\title[Distributions defined by $Q$-supernomials]{Distributions defined by $q$-supernomials, fusion products,
and Demazure modules}
	
	\author{Stavros Kousidis}
	\address{Rosenthalstr.~17, 53859 Niederkassel, Germany}
	\email{st.kousidis@googlemail.com}

	\author{Ernst Schulte-Geers}
	\address{Deutschherrenstr.~49, 53177 Bonn, Germany}
	\email{ernst.schulteg@t-online.de}
	
	\keywords{$q$-supernomial, current algebra, affine Kac-Moody algebra, fusion product, Demazure module, basic
specialization, asymptotic normality, central limit theorem, local central limit theorem, occupancy statistic, mixture
distribution}
	\date{\today}

	\begin{abstract}
		We prove asymptotic normality of the distributions defined by $q$-supernomials, which implies asymptotic normality of the distributions given by the central string functions and the basic specialization of fusion modules of the current algebra of $\mathfrak{sl}_2$. The limit is taken over linearly scaled fusion powers of a fixed collection of irreducible representations. This includes as special instances all Demazure modules of the affine Kac-Moody algebra associated to $\mathfrak{sl}_2$. Along with an available complementary result on the asymptotic normality of the basic specialization of
graded tensors of the type $A$ standard representation, our result is a central limit theorem for a serious class of
graded tensors. It therefore serves as an indication towards universal behavior: The central string functions and
the basic specialization of fusion and, in particular, Demazure modules behave asymptotically normal, as the number of
fusions scale linearly in an asymptotic parameter, $N$ say.
	\end{abstract}

	\maketitle
	
	\section{Introduction}
	\label{sec:introduction}

	The $q$-supernomial coefficients encode certain integer partitions as polynomials in a variable $q$ of the form
	\begin{align*}
  		\sum_{j_1 + \cdots + j_m =a} 	q^{\sum_{i=1}^m j_i (j_i + \sum_{\ell = 1}^{i-1}L_\ell)} \prod_{\ell=1}^m \qbinom{L_{\ell}+j_{\ell+1}}{j_{\ell}}_q .
  	\end{align*}
	The $L_1,\ldots,L_m$ and $a$ are nonnegative integers and $\qbinom{a}{b}_q$ denotes the well known $q$-binomial coefficient that enumerates inversions in words. They were introduced by Schilling and Warnaar, who studied their symmetries, recurrences and $q$-series limits and gave a combinatorial interpretation as the enumeration of so-called $(L_1,\ldots,L_m)$-admissible generalized Durfee dissection partitions with exactly $a$ parts \cite{MR1665322}.

A natural motivation for the investigation of asymptotic statistical properties of the discrete distributions defined by
the coefficients of those polynomials is to understand the qualitative behavior of this special kind of integer
partitions for large values of the parameters $L_1,...,L_m \in \mathbb{Z}_+$.

However, our initial motivation for the study of $q$-supernomials is their
appearance as Hilbert series of fusion modules of the current algebra $\mathfrak{sl}_r \otimes \mathbf{C}[t]$ that were introduced by Feigin and Loktev \cite{MR1729359}, that is tensor products
of irreducible representations endowed with a grading that is encoded by the variable
$q$. The coefficients of those Hilbert series (also called string
functions) encode dimensions of certain isotropic components called weight spaces.
While the exact determination of those coefficients by coefficient extraction is certainly possible in any fixed
instance of a $q$-supernomial, the explicit description of those coefficients remains intractable and one usually
is satisfied with concrete expressions for their generating function, the $q$-supernomial.
For large values of the parameters $L_1,...,L_m$ one may expect that the
asymptotic behavior of the distributions defined by the $q$-supernomials
is governed by probabilistic limit theorems, so that precise
assertions about  ``typical'' behavior are possible. In this work we show
that limit theorems towards asymptotically normal behavior do indeed
hold, and deduce the following result for the (central) string functions of fusion
modules of the current algebra $\mathfrak{sl}_2 \otimes \mathbf{C}[t]$ and their so-called basic
specialization (a sum of string functions).

    {\itshape Consider a sequence of fusion modules $(\mathbf{C}^2)^{\ast \Ln_1 } \ast \cdots \ast (\mathbf{C}^{m+1})^{\ast \Ln_m}$ of the current algebra $\mathfrak{sl}_2 \otimes \mathbf{C}[t]$.
    Assume that the exponents grow on a linear scale, i.e.~$(\Ln_1 ,\ldots , \Ln_m)/N \ra {\bf a} \neq 0$ as $N\ra \infty$. Then, the central string functions and basic specialization of those modules behave asymptotically normal with mean and variance growing quadratically and cubically in $N$, respectively, with explicitly calculable leading terms.}

We give the precise statements describing the leading terms of this asymptotic expansion of the Hilbert series in
\autoref{thm:central-limit-basic-spec-fusion} and \autoref{thm:central-limit-strings-fusion}. Concerning the asymptotic growth of dominating weight spaces in fusion modules we conjecture that corresponding local limit theorems hold, and that similar results can be shown for the fusion of type $A$ symmetric power representations.
Furthermore, we specialize our findings to the case of Demazure modules that have been studied through different methods
earlier in the literature.

The material is broadly divided into two parts: the first part is devoted to the probabilistic-combinatorial problem of
deriving limit theorems for the q-supernomial distributions, and the second part explains the representation theoretic
interpretation of the limit theorems derived in the first part.

	\section{Distributions defined by $q$-supernomials}
	\label{sec:distributions}

		Let $\vL:=(L_1,\ldots,L_m) \in \mathbb{Z}_+^m$, $a \in \mathbb{Z}_+$, $j_{m+1}=0$.
		Consider the $q$-supernomial 
		\begin{align}
		  \label{eq:q-supernomial-modified}
		  \tilde{T}({\bf L},a)(q) & = \sum_{j_1 + \cdots + j_m =a} q^{\sum_{i=1}^m j_i (j_i + \sum_{\ell =
		      1}^{i-1}L_\ell)} \prod_{\ell=1}^m \qbinom{L_{\ell}+j_{\ell+1}}{j_{\ell}}_q ,
		\end{align}
		that enumerates $a$-restricted $\vL$-admissible partitions \cite{MR1665322}\footnote{The notation $\tilde{T}({\bf L},a)(q)$ is taken from there for consistency.}, and the cumulative generating function of the unrestricted number of $\vL$-admissible partitions
		\begin{align}
			\tilde{T}(\vL)(q):=\sum_{a=0}^{L_1+\cdots + L_m} \tilde{T}(\vL,a)(q) .
		\end{align}
We show that the unrestricted number and in certain
(typical) cases the $a$-restricted number of $\vL$-admissible partitions are asymptotically normally distributed with
asymptotic parameter being a convergent sequence $\tfrac 1N \vLn$, as $N \rightarrow \infty$.

	\subsection{Statistical notions}

		Standard sources are \cite{MR1324786,MR1700749,MR0228020,MR0270403}.
		All our random variables $X$ will be discrete and finite. Recall that the expected value of such a
random variable is the weighted average $\Exp(X) = \sum_{x} \Prob(X=x) x$. The covariance of two random variables $X$
and $Y$ is $\cov (X,Y) = \Exp( (X - \Exp(X))(Y - \Exp(Y)))$. They are said to be uncorrelated if $\cov (X,Y) = 0$. The
variance of $X$ is $\var(X) = \cov(X,X)$. Its probability generating function is $\Exp (q^X) = \sum_x \Prob(X=x) q^x $,
and the associated probability distribution $\mu_X = \sum_x \Prob(X=x) \delta_x$. Here, $\delta_x$ denotes the Dirac
distribution (point mass) at $x$. A sequence $X_N$ converges $\Prob$-almost surely ($\pas$ for short) to $X$ if $\Prob
(\lim_{N \rightarrow \infty} X_N = X ) =1$. Convergence and equality in distribution will be denoted by $\lrad$ and
$\deq$, respectively. $\mathcal{N}(\mu,\Sigma)$ will denote the normal distribution with mean $\mu$ and covariance
matrix $\Sigma$. Note that $\mathcal{N}(\mu,0) = \delta_\mu$. The conditional probability $\Prob (Y=y | X=x) =
\Prob(X=x)^{-1} \Prob(X = x, Y=y)$ is the probability of $Y$ taking the value $y$ given the occurence of the value $x$
for $X$. A mixture of distributions $\mu_{X_i}$ is a convex combination thereof, i.e.~$\sum_i w_i \mu_{X_i}$ for
some weights $w_i \geq 0$ with $\sum_i w_i =1$. The probability generating function of a mixture is $\sum_i w_i
\Exp(q^{X_i})$.

	\subsection{Preliminaries}
	\label{sec1}
		The distributions with probability generating function
		\begin{align}
			F_{a,b}(q):=\qbinom{a+b}{a}_q \big/ {a+b \choose a}
		\end{align}
were first investigated by Mann and Whitney \cite{MR0022058}, who showed:
		\begin{thm}\label{thm1}
			Let $\Inv_{a,b}$ be a random variable with distribution $F_{a,b}$. Then $\Inv_{a,b}$ has
expectation $\Exp(\Inv_{a,b})=	\frac{1}{2}ab$, variance $\var(\Inv_{a,b})=\frac{1}{12} ab(a+b+1)$, and
as $a,b \ra \infty$ one has
			\[
				\frac{\Inv_{a,b}-\Exp(\Inv_{a,b})}{\sqrt{\var(\Inv_{a,b})}}\lrad \mathcal{N}(0,1) .
			\]
		\end{thm}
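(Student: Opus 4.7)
The plan is to read off the cumulants of $Inv_{a,b}$ from the explicit $q$-binomial factorisation
$$\qbinom{a+b}{a}_q = \prod_{i=1}^{a}\frac{[b+i]_q}{[i]_q}$$
and to close the argument by the method of cumulants. The key observation is that for each $n$ the polynomial $[n]_q/n$ is the probability generating function of the discrete uniform random variable $U_n$ on $\{0,1,\ldots,n-1\}$. Hence the normalised PGF of $Inv_{a,b}$ factors as a quotient of PGFs of such uniforms, and its cumulant generating function telescopes as
$$K_{Inv_{a,b}}(t) \;=\; \sum_{i=1}^{a}\bigl(K_{U_{b+i}}(t) - K_{U_i}(t)\bigr).$$

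First I would read the mean and variance directly off this identity. Using the standard values $\Exp(U_n)=(n-1)/2$ and $\var(U_n)=(n^2-1)/12$, the telescoping sums collapse to $\Exp(Inv_{a,b})=\sum_{i=1}^a b/2 = ab/2$ and $\var(Inv_{a,b})=\sum_{i=1}^a (2bi+b^2)/12 = ab(a+b+1)/12$, matching the claimed formulas. For the CLT I would then estimate the higher cumulants. The elementary a priori bound $|\kappa_j(U_n)| = O_j(n^j)$ (which follows from the boundedness of $U_n$, or equivalently from the convergence of $U_n/n$ to Uniform$[0,1]$) together with the mean-value estimate $(b+i)^j - i^j = O_j\bigl(b(a+b)^{j-1}\bigr)$ yields
$$\bigl|\kappa_j(Inv_{a,b})\bigr| \;=\; O_j\!\bigl(ab(a+b)^{j-1}\bigr).$$
Dividing by $\var(Inv_{a,b})^{j/2}\asymp \bigl(ab(a+b)\bigr)^{j/2}$ gives
$$\frac{|\kappa_j(Inv_{a,b})|}{\var(Inv_{a,b})^{j/2}} \;=\; O_j\!\left(\Bigl(\tfrac{1}{a}+\tfrac{1}{b}\Bigr)^{j/2-1}\right) \;\longrightarrow\; 0$$
for every $j\geq 3$ as $a,b\to\infty$. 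The method of cumulants (equivalently, the method of moments applied to the standardised variable) then yields $(Inv_{a,b}-\Exp(Inv_{a,b}))/\sqrt{\var(Inv_{a,b})}\lrad \mathcal{N}(0,1)$.

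The only real obstacle is the bookkeeping in the cumulant bound; the underlying telescoping identity and the moment/cumulant machinery are otherwise routine. An equally viable alternative, which I would keep in reserve, is to observe that $Inv_{a,b}$ is an affine function of $\sum_{j=1}^{b} p_j$, where $p_1<\cdots<p_b$ are the positions of the ones in a uniformly random word from $0^a1^b$, i.e.\ a simple random sample of size $b$ from $\{1,\ldots,a+b\}$, and then to invoke the classical Wald--Wolfowitz/Hoeffding central limit theorem for finite-population sums. Both routes arrive at the same asymptotic normality statement, and both recover the explicit mean and variance as a by-product.
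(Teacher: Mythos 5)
Your argument is correct, and it is worth noting that the paper does not actually prove \autoref{thm1}: the statement is quoted from Mann and Whitney, who obtained the moments from a recurrence for the distribution of their $U$-statistic and established normality by the method of moments, with the local version later supplied by Tak\'acs. Your route through the factorisation $\qbinom{a+b}{a}_q=\prod_{i=1}^a [b+i]_q/[i]_q$ is therefore a genuinely different, self-contained proof: the telescoping of cumulant generating functions of discrete uniforms gives the exact mean and variance with no computation beyond $\Exp(U_n)=(n-1)/2$ and $\var(U_n)=(n^2-1)/12$, and reduces the CLT to a cumulant estimate; your reserve route, writing $Inv_{a,b}=ab+\binom{b+1}{2}-\sum_k p_k$ with $p_1<\cdots<p_b$ a simple random sample from $\{1,\dots,a+b\}$, correctly places the theorem under the Wald--Wolfowitz/H\'ajek finite-population CLT and would also work. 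One step needs tightening: the a priori bound $|\kappa_j(U_n)|=O_j(n^j)$ does not by itself control the \emph{differences} $\kappa_j(U_{b+i})-\kappa_j(U_i)$ --- it only gives $O_j((a+b)^j)$ per term, hence $O_j(a(a+b)^j)$ in total, which is not $o(\var^{j/2})$ when $b\ll a$ --- so the mean-value estimate has nothing to bite on as written. The fix is already implicit in your setup: since $K_{U_n}(t)=\log\frac{e^{nt}-1}{nt}-\log\frac{e^{t}-1}{t}$ and the $n$-dependence sits entirely in the first summand, one has exactly $\kappa_j(U_n)=c_jn^j+d_j$ with constants depending only on $j$ (explicitly $c_j=B_j/j$ for $j\ge 2$, vanishing for odd $j\ge 3$), whence $\kappa_j(U_{b+i})-\kappa_j(U_i)=c_j\bigl((b+i)^j-i^j\bigr)$ and your mean-value bound applies verbatim. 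With that one line added, the estimate $|\kappa_j(Inv_{a,b})|=O_j\bigl(ab(a+b)^{j-1}\bigr)$, the vanishing of the standardised cumulants of order $j\ge 3$, and the conclusion via moment-determinacy of $\mathcal{N}(0,1)$ all go through as you state.
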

		\noindent A corresponding local limit theorem was proved by Takacs \cite{MR845921}.
		
		\begin{rem}
It is well known that the $q$-binomial $\qbinom{a+b}{a}_q$ is the generating function for inversions in words of $a$ zeroes and $b$ ones. Consider a word (unordered sequence) $w=(w_1,\ldots,w_{n})$ of elements from an ordered set. A
$4$-tuple $(i,j,w_i,w_j)$ with $i<j$ and $w_i>w_j$ is then called an inversion. 
		\end{rem}
		
		Let us also recall the following classical result about the asymptotic normality of multinomial distributions
		(see e.g. \cite{mahoniannormality}).
		\begin{thm}
		\label{thm6}
			Let the sequence $\vBn$ have the multinomial distribution with parameters $N$ and $\vp=(p_0,p_1,\ldots,p_m)$. Then, we have mean $\Exp(\vBn)=N\vp$, covariance matrix $\cov(\vBn)=N\Sigma$, and
			\[
				\frac{\vBn-N\vp}{N^{1/2}} \lrad \mathcal{N}(\vnu,\Sigma) ,
			\]
			where $\Sigma=\diag(\vp)-\vp^t\vp$ (not of full rank).
		\end{thm}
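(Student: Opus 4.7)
The plan is to realize $\vBn$ as a sum of independent identically distributed random vectors and invoke the multivariate central limit theorem. Specifically, let $\vX_1, \ldots, \vX_N$ be i.i.d.~random vectors in $\mathbf{R}^{m+1}$, each taking the value $\ve_i$ (the $i$-th standard basis vector) with probability $p_i$ for $i = 0, 1, \ldots, m$. The joint generating function of one such $\vX_k$ is $\sum_{i=0}^m p_i t_i$, whose $N$-fold product $(\sum_i p_i t_i)^N$ agrees with the generating function of $\vBn$; hence $\vBn \deq \sum_{k=1}^N \vX_k$.

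First I would compute the mean and covariance of a single summand. Clearly $\Exp(\vX_k) = \vp$. For the covariance, observe that the coordinates $X_k(i)$ and $X_k(j)$ cannot both be non-zero when $i \neq j$, while $X_k(i)^2 = X_k(i)$. Hence $\Exp(X_k(i) X_k(j)) = p_i \delta_{ij}$, and the covariance matrix of a single $\vX_k$ is exactly $\diag(\vp) - \vp^t\vp = \Sigma$. By linearity and independence, $\Exp(\vBn) = N\vp$ and $\cov(\vBn) = N\Sigma$, which gives the first two assertions.

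For the convergence in distribution I would apply the Cram\'er--Wold device. For any fixed $\vt \in \mathbf{R}^{m+1}$, the scalars $\langle \vt, \vX_k \rangle$ are i.i.d., bounded (hence have finite variance $\vt^t \Sigma \vt$), and have mean $\langle \vt, \vp \rangle$. The classical one-dimensional central limit theorem therefore yields
\[
\frac{\langle \vt, \vBn - N\vp \rangle}{\sqrt{N}} \lrad \mathcal{N}(0, \vt^t \Sigma \vt).
\]
Since this holds for every $\vt$, Cram\'er--Wold implies that $(\vBn - N\vp)/\sqrt{N}$ converges in distribution to the Gaussian law with mean $\vnu$ and covariance $\Sigma$, as claimed.

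The only subtle point, which I expect to be the main conceptual (rather than technical) obstacle, is that $\Sigma$ is singular: the constraint $\sum_i X_k(i) = 1$ forces the all-ones vector to lie in the kernel of $\Sigma$, so the limiting Gaussian is degenerate and supported on the hyperplane $\{\vx : \sum_i x_i = 0\}$. This causes no difficulty for the Cram\'er--Wold argument, but it is the reason the statement is phrased with $\mathcal{N}(\vnu, \Sigma)$ for a possibly singular covariance (interpreted through its characteristic function $\exp(-\tfrac12 \vt^t \Sigma \vt)$). No further estimates are required; the result follows entirely from classical one-dimensional CLT applied to linear projections.
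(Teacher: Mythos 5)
Your proof is correct. The paper does not actually prove Theorem~\ref{thm6} at all --- it is recalled as a classical fact about multinomial distributions with no argument given --- and what you have written is precisely the standard justification: the decomposition of $\vBn$ into a sum of $N$ i.i.d.\ indicator vectors, the elementary computation $\Exp(X_k(i)X_k(j)) = p_i\delta_{ij}$ giving $\Sigma = \diag(\vp)-\vp^t\vp$, and the Cram\'er--Wold reduction to the one-dimensional CLT. Your remark on the degeneracy of $\Sigma$ (the all-ones vector lies in its kernel, so the limit law is supported on the hyperplane $\{\vx : \sum_i x_i = 0\}$) is apt and consistent with the paper's convention $\mathcal{N}(\mu,0)=\delta_\mu$; nothing further is needed.
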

		
		\subsubsection{Elementary definitions}
		\label{sec:elementary-definitions}
		
		We call a vector $\vj=(j_1,\ldots,j_m)\in \mathbb{Z}_+^m$ compatible to $\vL \in \mathbb{Z}_+^m$ if
$j_i\leq L_i+j_{i+1}$ for $i=1,\ldots m$, and a probability distribution $\vL$-compatible if the set of $\vL$-compatible
values has probability one. For $\vL$-compatible $\vj$ we let $\Inv(\vL,\vj)$ denote a random variable with probability
generating function
\begin{align}
  F(\vL,\vj)(q):=\prod_{i=1}^m F_{L_i+j_{i+1},j_i}(q) ,
  \end{align}
  where here and in the sequel $j_{m+1}=0$. We let
		\begin{align}
			Q(\vL,\vj)=\sum_{i=1}^m j_i \Big(j_i+\sum_{\ell=1}^{i-1}L_\ell \Big) .
		\end{align}
		We view the normalized generating function $\tilde{T}(\vL)(q) / \tilde{T}(\vL)(1)$ (and analogously $\tilde{T}(\vL,a)(q) / \tilde{T}(\vL,a)(1)$) as a mixture of probability generating functions
		
		\begin{align}
		  \tilde{T}(\vL)(q)/\tilde{T}(\vL)(1) = \sum_{\vj} q^{Q(\vL,\vj)} F (\vL,\vj)(q)\, \Prob(\vJ=\vj),
		\end{align}		
		weighted by $\Prob(\vJ=\vj)=\Prob(J_1=j_1,\ldots ,J_m=j_m)= \prod_{i=1}^m {L_i+j_{i+1} \choose j_i} / \tilde{T}(\vL)(1)$.
		The implicit dependency on the admission vector $\vL$ will be from now on suppressed in the notation for convenience. We furthermore define (the conditional distribution of) a random variable $Y$ by $\Prob(Y=i\;|\vJ=\vj)=\Prob(\Inv(\vL,\vj)=i)$, so that we can rephrase
		\begin{align}
		\label{eq:prob-gen-fctn-unrestricted-no-parts}
			 \tilde{T}(\vL)(q)/\tilde{T}(\vL)(1) = \Exp (q^{Q(\vL,\vJ) + Y}) .
		\end{align}
		Consequently, our interest lies in the distribution of the random variable
		\begin{align}
		\label{eq:T}
			T=Q(\vL,\vJ)+Y ,
		\end{align}
		whose asymptotics we will treat by splitting $T$ into an ``occupancy part'' $Q(\vL,\vJ)+\Exp(Y|\vJ)$, dependent only on  $\vJ$, and a remaining ``rest-inversion part'' $R=Y-\Exp(Y|\vJ)$ ``orthogonal'' to $\vJ$, and investigating the two parts individually.
		We note that by \autoref{thm1} we have
		\begin{align}
			\Exp(Y|\vJ)=\frac{1}{2}\left((L_m-J_m)J_m+\sum_{i=1}^{m-1} (L_i+J_{i+1}-J_i)J_i\right)=: e(\vL,\vJ)
		\end{align}
		so that $\Exp(Y|\vJ)$ is a quadratic function in $\vJ$. Let us call the distribution of $\vJ$ the \emph{mixing distribution} (for the total number of
$\vL$-admissible partitions). To explain the behavior of mixing distributions and to introduce a convenient way to
refer to them, let us describe a simple random experiment. We focus our attention on the unrestricted case first.
		
	\subsubsection{The probabilistic setup for the unrestricted case}
	\label{sec:experiment-unrestricted}
		
		Consider $m$ mutually independent random sources $\mathcal{S}_1,\mathcal{S}_2,\ldots,\mathcal{S}_m$
emitting words $W(1),\ldots, W(m)$. Each word  $W(i)=(X_1(i),X_2(i),\ldots,X_{L_i}(i))$ is a sequence of $L_i$ many
mutually independent letters $X_k(i)$, where each $X_k(i)$ is uniformly distributed from the alphabet $\{0,\ldots,i\}$. 
		For $0\leq i \leq k$ let $B_i(k):=\sum_{j=1}^{L_k} 1_{\{i\}}(X_j(k))$ denote the random variable
\emph{the number of appearances of letter $i$ in word $W(k)$}. Then, the random vector $\vB(k):=(B_0(k),\ldots,B_k(k))$
is the \emph{occupancy (statistic) of word $W(k)$}, $S(k):=\sum_{i=1}^{L_i} X_i(k)=\sum_{j=0}^k jB_j(k)$ is the
\emph{sum of word $W(k)$}, $\vB_\vL:=(\vB(1),\ldots,\vB(m))$ is the \emph{total occupancy of $W(1),\ldots,W(m)$}, and
$S_\vL:=\sum_{i=1}^m S(i)$ is called the \emph{total sum of words $W(1),\ldots,W(m)$}. Clearly, under the assumptions
above, $S_\vL$ is the sum of independent uniformly distributed random variables and we have
		\begin{align}
			\label{eq:mean-tensor}
			\Exp(S_\vL) & =\frac{1}{2}\sum_{i=1}^m iL_i , \\
			\label{eq:variance-tensor}
			\var(S_\vL) & = \frac{1}{12}\sum_{i=1}^m (i+2)i L_i.
		\end{align}
		The probability distribution of $\vJ$ arises in this experiment as follows.
		\begin{prp}
		\label{prop2}
			Let $\vB_\vL=(\vB(1),\ldots,\vB(m))$ be as above. That is, the random vectors $\vB(i)=(B_0(i),\ldots,B_i(i))$ are independent, where each $\vB(i)$ has a (uniform) multinomial distribution with parameters $L_i$ and $p_0=\ldots=p_i=\frac{1}{i+1}$. Define
			\[
				J_i:=\sum_{k=i}^m A_{k-i+1}(k) \mbox{, where } A_k(i):=\sum_{j=k}^i B_j(i).
			\]
			Then, the joint distribution of $(J_1,\ldots,J_m)$ is 
			\[
				\Prob(J_1=j_1,\ldots ,J_m=j_m)=\prod_{i=1}^m {L_i+j_{i+1} \choose
				j_i}/ (i+1)^{L_i} .
				%\Prob(J_1=j_1,\ldots ,J_m=j_m)=\left(\prod_{i=1}^m {L_i+j_{i+1} \choose j_i}\right)/\left(\prod_{i=1}^m (i+1)^{L_i}\right) .
			\]
			where $\vj = (j_1,\ldots,j_m)\in \mathbb{N}_0^m$ (and with the usual convention about binomial
			coefficients that $\binom{a}{b} = 0$ unless $0 \leq b\leq a$). These numbers define a $\vL$-compatible probability distribution.
		\end{prp}
		\begin{proof}
			By formal generating functions it is clear that the joint generating function of $(J_1,\ldots,J_m)$ as defined above is
			\[
				\Exp (t_1^{J_1}\ldots t_m^{J_m})= \prod_{i=1}^m\big(\sum_{j=0}^i\;\prod_{k=i-j+1}^i t_k\big)^{L_i} / (i+1)^{L_i} .
				%\Exp (t_1^{J_1}\ldots t_m^{J_m})=\Big(\prod_{i=1}^m\big(\sum_{j=0}^i\;\prod_{k=i-j+1}^i t_k\big)^{L_i}\Big)/\Big(\prod_{i=1}^m(i+1)^{L_i}\Big) .
			\]
			Now extract coefficients to see that this corresponds to the distribution defined above. The $\vL$-compatibility is obvious. 
		\end{proof}
		
		\begin{rem}
			The proof shows that  $\tilde{T}(\vL)(1)=\prod_{i=1}^m(i+1)^{L_i}$, which can be seen directly from the combinatorial definition of $q$-supernomials. Furthermore, since $A_{k-i+1}(k) = B_{k-i+1}(k) + B_{k-i+2}(k) + \cdots + B_{k}(k)$ counts the number of appearances of the highest $i$ letters in word $W(k)$, the $J_i$ may be described as the overall count of the $i$ highest non-zero letters in all words.
		\end{rem}

		\autoref{prop2} shows that the mixing distribution for the total number of $\vL$-admissible partitions 
may be realized as a simple linear transformation of $\vB_\vL$. We call $\vB_\vL$ the \emph{underlying occupancy
distribution}. This representation can be used for explicit calculations, and reduces the asymptotic treatment of $\vJ$
in the unrestricted case to the well known asymptotics of multinomial distributions as described in \autoref{thm6}.

		\subsubsection{The probabilistic setup for the $a$-restricted case}
		\label{sec:prob-setup-restricted}
			The same experiment as in \autoref{sec:experiment-unrestricted} describes the $a$-restricted case when we consider only the outcomes with total sum $S_\vL=a$. That is, for the $a$-restricted case the underlying occupancy distribution is $\vB_\vL\,|\,S_\vL=a$, i.e.~the distribution of $\vB_\vL$ conditioned by $S_\vL=a$. In order to have a succinct
wording for the ``most important'' restricted cases we make the following definition (Cf.~\eqref{eq:mean-tensor}).
			\begin{dfn}
			\label{def1} 
				We call the $a$-restricted cases with $a=\Exp(S_\vL)$ (resp.~$a=\Exp(S_\vL)\pm \tfrac 12$) \emph{central}. 
			\end{dfn}
			\noindent This terminology is justified due to the symmetry of the distribution of $S_\vL$ around $\Exp(S_\vL)$, and their central importance due to the strong law of large numbers, 
			\[
				\frac 1N \Exp(S_{\vLn}) \stackrel{\pas}{\xrightarrow{\hspace*{1cm}}} \frac 12 \sum_{k=1}^m ka_k \mbox{, as $\tfrac 1N \vLn \ra (a_1 , \ldots , a_m)$.}
			\]

	\subsection{General asymptotic considerations}
	\label{sec:general-asymptotics}
	
	Throughout this section let $\vLn$ be a sequence of admission vectors, and $\vJn$ a sequence of $\vLn$-compatible mixing distributions. We consider the sequence of inversion statistics 
	\[
		\Yn, \mbox{ defined by } \Prob(\Yn=i\;|\,\vJn=\vj)=\Prob(\Inv(\vLn,\vj)=i) ,
	\]
	and recall the associated definitions from \autoref{sec:elementary-definitions},
	\begin{align*}
		\Tn & = Q(\vLn,\vJn)+\Yn , \\
		\Rn & = \Yn - \Exp( \Yn | \vJn) , \\
		e(\vLn , \vJn) & = \Exp( \Yn | \vJn) .
	\end{align*}
	Let us note the asymptotic behavior of the conditional distribution of $\Rn$. % given that $\vJn = \vjn$.
	\begin{lem}\label{lem1}
		If $\tfrac 1N \vLn \ra \va \neq \vnu$, and $\vjn$ is a sequence of $\vLn$-compatible vectors such that $\tfrac 1N \vjn \ra \vb \neq \vnu$, then
		\[
			\frac{\Inv(\vLn,\vjn)-\Exp(\Inv(\vLn,\vjn)}{N^{3/2}}\lrad \mathcal{N}(0,v(\va,\vb))
		\]
		where $v(\va,\vb)=\frac{1}{12}\sum_{i=1}^m(a_i+b_{i+1}-b_{i})b_i(a_i+b_{i+1})$ (where $b_{m+1}=0$).
	\end{lem}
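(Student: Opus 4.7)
The plan is to realize $Inv(\vLn,\vjn)$ as a sum of independent Mann--Whitney random variables and apply Theorem \ref{thm1} termwise. Since $F(\vLn,\vjn)(q)$ factors as a product of $m$ pieces, with the $i$-th factor equal to the normalized $q$-binomial $\qbinom{L_i^{(N)}+j_{i+1}^{(N)}}{j_i^{(N)}}_q\big/\binom{L_i^{(N)}+j_{i+1}^{(N)}}{j_i^{(N)}}$, I would write
\[
Inv(\vLn,\vjn) \deq \sum_{i=1}^m X_i^{(N)}, \qquad X_i^{(N)} \text{ mutually independent},
\]
where each $X_i^{(N)}$ has the Mann--Whitney distribution with parameters $(L_i^{(N)}+j_{i+1}^{(N)}-j_i^{(N)},\,j_i^{(N)})$. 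Both parameters are non-negative by $\vLn$-compatibility, and summing the resulting means over $i$ reproduces the formula for $e(\vL,\vJ)$ recorded above. By Theorem \ref{thm1},
\[
\Exp(X_i^{(N)}) = \tfrac12(L_i^{(N)}+j_{i+1}^{(N)}-j_i^{(N)})j_i^{(N)},\quad \var(X_i^{(N)}) = \tfrac1{12}(L_i^{(N)}+j_{i+1}^{(N)}-j_i^{(N)})j_i^{(N)}(L_i^{(N)}+j_{i+1}^{(N)}+1),
\]
so under $\tfrac1N\vLn\to\va$ and $\tfrac1N\vjn\to\vb$ one obtains $\var(X_i^{(N)})/N^3 \to \sigma_i^2 := \tfrac1{12}(a_i+b_{i+1}-b_i)b_i(a_i+b_{i+1})$, and $\sum_{i=1}^m \sigma_i^2 = v(\va,\vb)$.

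Next I would analyze each summand asymptotically, splitting into two cases. For indices $i$ with both $b_i > 0$ and $a_i + b_{i+1} - b_i > 0$, the two Mann--Whitney parameters grow linearly in $N$, so Theorem \ref{thm1} applies and gives
\[
\frac{X_i^{(N)} - \Exp(X_i^{(N)})}{N^{3/2}} \lrad \mathcal{N}(0,\sigma_i^2).
\]
For the ``degenerate'' indices where either $b_i=0$ or $a_i + b_{i+1} - b_i = 0$, at least one of the parameters is $o(N)$, so $\var(X_i^{(N)}) = o(N^3)$, and Chebyshev's inequality forces $(X_i^{(N)}-\Exp(X_i^{(N)}))/N^{3/2} \to 0$ in probability, which is consistent with $\sigma_i^2 = 0$ in the limit.

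Finally, because the $X_i^{(N)}$ are independent across $i$ for each fixed $N$, the joint distribution of the centered and $N^{3/2}$-normalized summands converges to a vector of independent normals (possibly with degenerate coordinates); characteristic functions multiply, giving
\[
\frac{Inv(\vLn,\vjn)-\Exp(Inv(\vLn,\vjn))}{N^{3/2}} \lrad \mathcal{N}\!\left(0,\sum_{i=1}^m\sigma_i^2\right) = \mathcal{N}(0,v(\va,\vb)),
\]
which is precisely the claim. The only delicate step I anticipate is the treatment of degenerate indices where Theorem \ref{thm1} cannot be invoked directly, but the variance estimate combined with Chebyshev resolves this cleanly, so no serious obstacle is expected.
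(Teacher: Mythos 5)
Your proposal is correct and follows essentially the same route as the paper: decompose $Inv(\vLn,\vjn)$ into a sum of $m$ independent Mann--Whitney variables with parameters $(L_i^{(N)}+j_{i+1}^{(N)}-j_i^{(N)},\,j_i^{(N)})$, apply \autoref{thm1} to the non-degenerate summands, handle the degenerate ones separately, and combine by independence. Your explicit Chebyshev argument for the degenerate indices merely spells out what the paper asserts directly, so there is no substantive difference.
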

	
	\begin{proof}
		By definition, the random variable
		\[
			\Inv(\vLn,\vjn)-\Exp(\Inv(\vLn,\vjn))
		\]
		is distributed like the sum $\sum_{i=1}^m \Xn_i$ of $m$ independent random variables
		\[
			\Xn_i \deq \Inv(\Ln_i + \jn_{i+1} - \jn_i , \jn_i ) - \frac{1}{2}\left(\Ln_i + \jn_{i+1} - \jn_i )\,\jn_i \right) .
		\]
		Let $\an_i=\Ln_i+\jn_{i+1}-\jn_i$ and $\bn_i=\jn_i$. By assumption the limits $a_i=\lim \tfrac 1N \an_i$ and $b_i=\lim \tfrac 1N \bn_i$ exist.
		If $a_i=0$ or $b_i=0$, then $\tfrac{1}{N^{3/2}} \Xn_i \lrad 0\deq \mathcal{N}(0,0)$. If $a_i>0$
and $b_i>0$ we find that 
		$\tfrac{1}{N^3} \var(\Inv(\an_i,\bn_i) \ra \frac{1}{12} a_ib_i(a_i+b_i):=w(a_i,b_i)$, and \autoref{thm1} gives
		\[
			\frac{\Xn_i}{N^{3/2}}\lrad \mathcal{N}(0,w(a_i,b_i)) .
		\]
		Thus under the conditions above $\tfrac{1}{N^{3/2}} \sum_{i=1}^m \Xn_i \lrad \mathcal{N}(0,\sum_{i=1}^m w(a_i,b_i))=\mathcal{N}(0,v(\va,\vb))$.
	\end{proof}
	
	\begin{rem}
		The case $a_ib_i=0$ for all $i$ (that is $v(\va,\vb)=0$) is less interesting but not excluded. In this case we interpret  $\mathcal{N}(0,0):=\delta_0$ as the Dirac-measure (point mass) at $0$.
	\end{rem}

	Next we observe 
	that under mild conditions the limiting distributions of the (normalized) random variables $\Rn$ and $\vJn$ are asymptotically independent.
	
	\begin{thm}
	\label{thm2}
		Let $R(\va,\vb)$ denote a random variable with distribution $\mathcal{N}(0,v(\va,\vb))$.
		If $\tfrac 1N \vLn \ra \va \neq \vnu$, and if there exists $\vb\in\mathbb{R}^m_+$ and a positive semidefinite matrix $\Sigma\in\mathbb{R}^{m\times m}$ of positive rank such that
		\[
			\frac{\vJn- N\vb}{N^{1/2}} \lrad \mathcal{N}(\vnu,\Sigma) .
		\]
		Then, as $N\lra \infty$,
		\[
			\Big(\frac{\Rn}{N^{3/2}},\frac{\vJn -N\,\vb}{N^{1/2}}\Big)\lrad \big(R(\va,\vb),\mathcal{N}(\vnu,\Sigma)\big) ,
		\]
		where the constituents on the right-hand side are independent.
	\end{thm}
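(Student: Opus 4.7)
The plan is to prove the joint convergence via characteristic functions; the asymptotic independence of the two coordinates will arise naturally from the tower property of conditional expectation combined with \autoref{lem1}. I would first write the joint characteristic function
\[
\varphi_N(s,\vt) := \Exp\!\left[ \exp\!\left( i s \tfrac{R^{(N)}}{N^{3/2}} + i \vt \cdot \tfrac{\vJn - N\vb}{N^{1/2}} \right) \right]
\]
and condition on $\vJn$ to obtain
\[
\varphi_N(s,\vt) = \Exp\!\left[ e^{i \vt \cdot (\vJn - N\vb)/N^{1/2}} \, \psi_N(s,\vJn) \right],
\]
where $\psi_N(s,\vjn) := \Exp[e^{i s R^{(N)}/N^{3/2}} \mid \vJn = \vjn]$ is, by the very definition of $R^{(N)}$, the value at argument $s/N^{3/2}$ of the characteristic function of the centered inversion variable $Inv(\vLn,\vjn)-\Exp(Inv(\vLn,\vjn))$.

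Next I would invoke \autoref{lem1}: for every deterministic sequence $\vjn$ with $\vjn/N \to \vb$, L\'evy's continuity theorem combined with \autoref{lem1} yields the pointwise convergence $\psi_N(s,\vjn) \to \exp(-s^2 v(\va,\vb)/2)$. The hypothesis that $(\vJn - N\vb)/N^{1/2}$ converges in distribution forces $\vJn/N \to \vb$ in probability, and a standard subsequence argument (from any subsequence extract a sub-subsequence along which $\vJn/N \to \vb$ almost surely, then apply the deterministic convergence pathwise) promotes this to $\psi_N(s,\vJn) \to \exp(-s^2 v(\va,\vb)/2)$ in probability.

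Finally both factors in the conditioned integrand are bounded in modulus by $1$: the first converges in distribution to $e^{i\vt \cdot Z}$ with $Z \sim \mathcal{N}(\vnu,\Sigma)$, while the second converges in probability to the deterministic constant $\exp(-s^2 v(\va,\vb)/2)$. Slutsky's theorem together with bounded convergence then gives
\[
\varphi_N(s,\vt) \;\longrightarrow\; \exp\!\left(-\tfrac{s^2}{2} v(\va,\vb)\right) \cdot \exp\!\left(-\tfrac12 \vt \Sigma \vt^t\right),
\]
which is precisely the joint characteristic function of the independent pair $\big(R(\va,\vb), \mathcal{N}(\vnu,\Sigma)\big)$; L\'evy's continuity theorem in its joint form closes the argument. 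The only delicate point is the lifting of \autoref{lem1} from deterministic sequences $\vjn$ to the random sequence $\vJn$, and the subsequence trick sketched above makes this lifting routine precisely because $|\psi_N| \le 1$ eliminates any uniform integrability concern; aside from this bookkeeping, the proof is a routine characteristic-function calculation.
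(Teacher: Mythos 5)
Your proof is correct and follows essentially the same route as the paper's: both condition on $\vJn$, apply \autoref{lem1} to the resulting conditional law, and then transfer the deterministic convergence to the random argument $\vJn$ before passing to the limit by bounded convergence. The only (cosmetic) differences are that you use joint characteristic functions and a subsequence argument where the paper uses test functions of the form $1_A\cdot f$ and Skorokhod's representation theorem to upgrade $\tfrac 1N \vJn \ra \vb$ to almost-sure convergence.
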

	
	\begin{proof}
		Let $A\subset \mathbb{R}$ be a Borel set and $f:\mathbb{R}^m \rightarrow \mathbb{R}$ be bounded and continuous. We have
		\begin{align*}
			& \Exp\,1_A\Big(\frac{\Rn}{N^{3/2}}\Big)\,f\Big(\frac{\vJn-N\vb}{N^{1/2}}\Big)\\
			& \quad \quad =\Exp\,\Prob\Big(\frac{\Rn}{N^{3/2}}\in A\;\big|\;\vJn \Big)\,f\Big(\frac{\vJn-N\vb}{N^{1/2}}\Big)\\
			& \quad \quad =\Exp\,\Prob\Big(\frac{\Inv(\vLn,\vJn)- e(\vLn,\vJn)}{N^{3/2}}\in A\Big)\,f\Big(\frac{\vJn-N\vb}{N^{1/2}}\Big) . 
		\end{align*}
		By Skorokhod's representation theorem (see \cite{MR1700749,MR0084897}) we may assume that $\tfrac{\vJn-N\vb}{N^{1/2}}\lrad\vX$, $\pas$, where $\vX\deq \mathcal{N}(\vnu,\Sigma)$. Then clearly $\tfrac 1N \vJn \ra \vb$, $\pas$, and by the preceding lemma

		\[
			\Prob\Big(\frac{\Inv(\vLn,\vJn)- e(\vLn,\vJn)}{N^{3/2}}\in A\Big)\lra \mathcal{N}(0,v(\va,\vb))\big(A\big), \pas 
		\]
		Therefore,
		\[
			\Exp\,1_A\Big(\frac{\Rn}{N^{3/2}}\Big)\,f\Big(\frac{\vJn-N\vb}{N^{3/2}}\Big)\lra \mathcal{N}(0,v(\va,\vb))\big(A\big)\;\Exp\big(f(\vX)\big) .
		\qedhere
		\]
	\end{proof}
	Finally we need a result that enables us to treat the occurring quadratic functions of $\vJn$. 
	For quadratic functions of asymptotically normal random vectors $\vXn$ one has the following elementary result.
	\begin{prp}
	\label{prop1}
		Assume there exists $\vb\in\mathbb{R}^m$ and a positive semidefinite matrix $\Sigma\in\mathbb{R}^{m\times m}$ of positive rank such that
		\[
			\frac{\Exp(\vXn)}{N}\lra \vb \;\;\;\;\mbox{ and }\;\;\; \frac{\vXn- N\vb}{N^{1/2}}\lrad \mathcal{N}(\vnu,\Sigma) .
		\]
		Let $M\in \mathbb{R}^{m\times m}$, $\vv \in \mathbb{R}^m$, and consider the quadratic function $q(\vx,\vv)=\vx^t M\vx+ \vv\vx^t$. If we assume that $\vvn$ is a sequence with $\tfrac 1N \vvn \ra \va$ and let $\vw	=\vb(M+M^t) +\va$, then
		\[
			\frac{ q\big(\vXn,\vvn \big)-q\big(\Exp(\vXn),\vvn \big)}{N^{3/2}}\lrad \mathcal{N}(0,\vw\Sigma\vw^t) .
		\]
		If additionally $\tfrac 1N \cov(\vXn) \lra \Sigma$, then
	  \[
	    \frac{\Exp(q(\vXn,\vvn))}{N^2}\lra q(\vb,\va) ,
	  \]
	  and
	  \[
	  \frac{\Exp\left(q(\vXn,\vvn	)-q(\Exp(\vXn),\vvn)\right)}{N} \lra \sum_{i,j} M_{i,j}\Sigma_{i,j} .
	  \]
		If furthermore $\Exp\left(\vXn_i-\Exp(\vXn_i)\right)^4/N^3\lra 0$ for all $i$, then 
	  \[
	    \frac{\var(q(\vXn,\vvn))}{N^3}\lra \vw\Sigma \vw^t .
	  \]
	\end{prp}
	
	Combining \autoref{prop1} with \autoref{thm2} gives a complete picture of the asymptotic distributions
	of the random variables considered above.	
	
	\begin{cor}
	\label{cor1}
		In the situation of \autoref{thm2} assume that additionally $\tfrac 1N \Exp(\vJn) \ra \vb$. Let
		$b_0 = b_{m+1}=0$, $\Jn_{m+1}=0$,
		$\Cn=\frac{1}{2} \sum_{i=1}^m
\big(\Ln_i + \Exp(\Jn_{i+1})-\Exp(\Jn_i)\big)\,\Exp(\Jn_i)$, 
and $\en=\sum_{i=1}^m \Exp(\Jn_i)\big(\Exp(\Jn_i)+\sum_{k=1}^{i-1}\Ln_{k}\big)$,
and let $\vc=\vc(\va,\vb)$ be the vector with
coordinates $c_i=a_i+b_{i+1}+b_{i-1}-2b_i$
and $\vf=\vf(\va,\vb)$ be the vector with coordinates $f_i=a_i+b_{i+1}+b_{i-1}+2b_i+2\sum_{\ell=1}^{i-1}a_\ell$.
 Then,
		\begin{align}
			\label{eq:upperline}
		 	\frac{e(\vLn,\vJn)- \Cn}{N^{3/2}} & \lrad \mathcal{N}(0,\tfrac{1}{4} \vc\Sigma \vc^t) , \\
			\frac{\Yn- \Cn}{N^{3/2}} & \lrad \mathcal{N}(0,\tfrac{1}{4} \vc\Sigma \vc^t + v(\va,\vb)) ,\\ 
			\frac{\Tn-(\Cn+\en)}{N^{3/2}}& \lrad \mathcal{N}(0,\tfrac{1}{4}\vf\Sigma \vf^t + v(\va,\vb)) .
		\end{align}
	\end{cor}

	\begin{proof}
		The first assertion follows directly from \autoref{prop1}. For the second assertion observe that by
\autoref{thm2} the limiting distribution is the convolution of the normal distributions $R(a,b)$ and the limiting
distribution in \eqref{eq:upperline}.
	\end{proof}

	Finally, the variance of the rest-inversion part $\Rn$ is as follows.
        \begin{lem}
        \label{lemvarr}
	  In the situation of \autoref{thm2} assume that additionally $\tfrac 1N \Exp(\vJn) \ra \vb$, and that
$\tfrac 1N \vJn$ is bounded. Then,
	  \[
	    \frac{\var(\Rn)}{N^3}\lra v(\va,\vb) .
	  \]
        \end{lem}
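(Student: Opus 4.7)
The plan is to apply the law of total variance to reduce the claim to a statement about the limit of the expectation of a cubic polynomial in $\vJn$ and $\vLn$, and then to use the boundedness hypothesis on $\tfrac 1N \vJn$ to pass from convergence in probability to convergence in $L^1$.

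First I would observe that since $R^{(N)} = Y^{(N)} - \Exp(Y^{(N)}|\vJn)$, we have $\Exp(R^{(N)}|\vJn) = 0$ and, because subtracting a $\vJn$-measurable random variable does not affect the conditional variance, $\var(R^{(N)}|\vJn) = \var(Y^{(N)}|\vJn)$. The law of total variance then yields
\[
\var(R^{(N)}) = \Exp\big[\var(Y^{(N)}\,|\,\vJn)\big].
\]
Conditionally on $\vJn = \vj$, the variable $Y^{(N)}$ is distributed as $Inv(\vLn,\vj)$, a sum of $m$ independent Mann-Whitney statistics exactly as in the proof of \autoref{lem1}. Applying the variance formula of \autoref{thm1} componentwise gives the explicit cubic expression
\[
\var(Y^{(N)}\,|\,\vJn) = \tfrac{1}{12}\sum_{i=1}^m \big(L_i^{(N)} + J_{i+1}^{(N)} - J_i^{(N)}\big)\, J_i^{(N)}\, \big(L_i^{(N)} + J_{i+1}^{(N)} + 1\big)
\]
in the components of $\vLn$ and $\vJn$ (with $J_{m+1}^{(N)} = 0$).

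Dividing by $N^3$ turns this into a cubic polynomial in the rescaled quantities $\tfrac 1N \vLn$ and $\tfrac 1N \vJn$ plus a remainder of order $N^{-1}$. The hypothesis $\tfrac{1}{N^{1/2}}(\vJn - N\vb) \lrad \mathcal{N}(\vnu,\Sigma)$ from \autoref{thm2} implies by Slutsky that $\tfrac 1N \vJn \lrad \vb$ in probability, while $\tfrac 1N \vLn \to \va$ is deterministic. The continuous mapping theorem then produces
\[
\frac{1}{N^3}\,\var(Y^{(N)}\,|\,\vJn) \lrad \tfrac{1}{12}\sum_{i=1}^m (a_i + b_{i+1} - b_i)\, b_i\, (a_i + b_{i+1}) = v(\va,\vb)
\]
in probability as $N \to \infty$.

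The main (modest) obstacle is upgrading convergence in probability of the conditional variances to convergence of their expectations, and this is exactly where the new boundedness hypothesis enters. Since $\tfrac 1N \vJn$ is bounded by assumption and $\tfrac 1N \vLn$ is bounded by virtue of its convergence to $\va$, the random quantity $\tfrac{1}{N^3}\var(Y^{(N)}|\vJn)$ is dominated by a deterministic constant independent of $N$. The bounded convergence theorem therefore finishes the argument, yielding $\tfrac{1}{N^3}\Exp[\var(Y^{(N)}|\vJn)] \to v(\va,\vb)$. Without the boundedness hypothesis one would have to replace this step by a uniform integrability argument, which is precisely why the hypothesis is included in the statement.
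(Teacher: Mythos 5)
Your proposal is correct and takes essentially the same route as the paper: the paper likewise reduces $\var(R^{(N)})$ to $\Exp\big[\var(Inv(\vLn,\vJn))\big]$ via the tower property, plugs in the explicit cubic Mann--Whitney variance, and concludes by noting that $\var(Inv(\vLn,\vJn))/N^3$ ``converges boundedly'' to $v(\va,\vb)$ -- exactly your convergence-in-probability-plus-bounded-convergence step. The only cosmetic difference is that you retain the exact factor $(L_i^{(N)}+J_{i+1}^{(N)}+1)$ from \autoref{thm1} where the paper's display drops the $+1$; this is immaterial on the $N^3$ scale.
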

	\begin{proof}
	  We have $\Exp\big((\Rn)^2|\vJn\big) = \var(\Inv(\vLn,\vJn))$. Therefore
	  \[
	    \Exp\big((\Rn)^2|\vJn\big) =\tfrac{1}{12}\sum_{i=1}^m(\Ln_i +\Jn_{i+1} - \Jn_i ) \Jn_i (\Ln_i + \Jn_{ i+1}) ,
	  \]
	  and by our assumptions $\Exp\big((\Rn)^2|\vJn\big)/N^3$ converges boundedly to $v(\va,\vb)$. Hence
	  $\Exp\big((\Rn)^2\big)/N^3=\Exp\Big(\big((\Rn)^2|\vJn\big)\Big)/N^3\lra v(\va,\vb)$.
        \end{proof}
         
	\subsection{Unrestricted number of parts}
	\label{sec2}
		We first consider the total number $\tilde{T}(\vL)(q)$ of $\vL$-admissible partitions. In this case clearly $A_{k-i+1}(k)$ (as defined in \autoref{prop2}) has a binomial distribution with parameters $n=L_k$ and $p=\frac{i}{k+1}$, and hence each $J_i$ can be represented as a sum of independent binomial variables. Furthermore, the covariance of $A_{k-i+1}(k)$ and $A_{k-j+1}(k)$ can be computed as
		\[
			\cov(A_{k-i+1}(k),A_{k-j+1}(k))=\frac{L_k}{k+1}\left(\min(i,j)-\frac{ij}{k+1}\right) .
		\]
		We therefore have
		\begin{lem}
		\label{lem4}
			Consider $(J_1 , \ldots , J_m)$ as defined in \autoref{prop2}. Then,
			\begin{align}
				\Exp(J_i) & = i\sum_{k=i}^m\frac{L_k}{k+1} , \\
				\var(J_i) & = i\sum_{k=i}^m\frac{L_k}{k+1}-i^2\sum_{k=i}^m\frac{L_k}{(k+1)^2} , \\
\cov(J_i,J_j) & = \min(i,j)\sum_{k=\max(i,j)}^m\frac{L_k}{k+1}-ij\sum_{k=\max(i,j)}^m\frac{L_k}{(k+1)^2} .
			\end{align}
		\end{lem}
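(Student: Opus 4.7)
The plan is to derive all three formul\ae{} as direct consequences of \autoref{prop2} together with the two distributional facts highlighted just before the lemma: that $A_{k-i+1}(k) \sim \mathrm{Bin}(L_k, i/(k+1))$ (since $A_{k-i+1}(k)$ counts the independent events that each of the $L_k$ letters of word $W(k)$ lands in the top-$i$ subset of the alphabet $\{0, \ldots, k\}$) and the explicit within-word covariance formula $\cov(A_{k-i+1}(k), A_{k-j+1}(k)) = \frac{L_k}{k+1}\bigl(\min(i,j) - \frac{ij}{k+1}\bigr)$. Crucially, since the random sources $\mathcal{S}_1, \ldots, \mathcal{S}_m$ are mutually independent, any two summands $A_{k-i+1}(k)$ and $A_{k'-j+1}(k')$ coming from distinct words $k \neq k'$ are independent and therefore contribute zero to every cross-covariance.

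For the expectation, I would simply apply linearity to $J_i = \sum_{k=i}^m A_{k-i+1}(k)$ and use $\Exp(A_{k-i+1}(k)) = L_k \cdot \frac{i}{k+1}$, pulling the factor $i$ outside the sum to obtain $\Exp(J_i) = i \sum_{k=i}^m \frac{L_k}{k+1}$.

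For the variance, by independence of different words, $\var(J_i) = \sum_{k=i}^m \var(A_{k-i+1}(k))$; inserting the binomial variance $L_k \cdot \frac{i}{k+1}\bigl(1 - \frac{i}{k+1}\bigr)$ and splitting the sum gives the asserted expression. For the covariance with $i \le j$ (the other case follows by symmetry), expand
\[
\cov(J_i, J_j) = \sum_{k=i}^m \sum_{k'=j}^m \cov\bigl(A_{k-i+1}(k), A_{k'-j+1}(k')\bigr),
\]
observe that all summands with $k \neq k'$ vanish by independence of words, and that the surviving diagonal terms require $k \ge j = \max(i,j)$. Substituting the stated within-word covariance and noting $\min(i,j) = i$ in this case yields the final formula in a form symmetric in $i$ and $j$.

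There is essentially no obstacle: all the probabilistic input has already been assembled in \autoref{prop2} and the discussion immediately preceding the lemma, and the proof is a bookkeeping computation. The only point deserving a moment of care is the range of summation in the covariance — making sure that independence across words collapses the double sum to a single sum starting at $\max(i,j)$, rather than at $\min(i,j)$ as a careless reading of the indices might suggest.
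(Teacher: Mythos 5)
Your proof is correct and follows essentially the same route as the paper, which states the lemma as an immediate consequence of the preceding observations that $A_{k-i+1}(k)$ is binomial with parameters $L_k$ and $i/(k+1)$, that the words are independent, and that $\cov(A_{k-i+1}(k),A_{k-j+1}(k))=\frac{L_k}{k+1}\bigl(\min(i,j)-\frac{ij}{k+1}\bigr)$. Your bookkeeping of the summation range in the covariance (collapsing to the diagonal $k\ge\max(i,j)$) is exactly the intended argument.
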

		
		Moreover, straightforward computations lead to the exact expectation value of the random variable
defined in \eqref{eq:prob-gen-fctn-unrestricted-no-parts}.

		\begin{lem}
		\label{lem5}
			Consider the random variables defined in \eqref{eq:prob-gen-fctn-unrestricted-no-parts}, and let
$s_i=\sum_{k=i}^{m} \frac{L_k}{k+1}$, $t_i=\sum_{k=1}^{i-1}L_k$. Then, for $\Exp (T) = \Exp(Y) + \Exp(Q(\vL,\vJ))$ we
have
			\begin{eqnarray*}
				\Exp(Y)&=&\frac{1}{2}\sum_{i=1}^m is_i^2-\frac{1}{4}\sum_{i=1}^m \frac{i}{i+1}L_i ,\\
				\Exp(Q(\vL,\vJ))&=&\sum_{i=1}^m i^2s_i^2 + \sum_{i=1}^m is_it_i+\sum_{i=1}^m\frac{i(i+2)}{6(i+1)}L_i .
			\end{eqnarray*}
		\end{lem}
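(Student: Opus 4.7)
The plan is to compute $\Exp(Y)$ and $\Exp(Q(\vL,\vJ))$ separately, using $\Exp(T) = \Exp(Y) + \Exp(Q(\vL,\vJ))$ by linearity. Both expectations are polynomials of degree at most $2$ in the coordinates of $\vJ$, so they reduce to the first and second moments of $\vJ$ provided by \autoref{lem4}.

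First, by the tower property and the explicit form of $e(\vL,\vJ) = \Exp(Y|\vJ)$ recorded in \autoref{sec:general-asymptotics}, I would write
\[
\Exp(Y) \;=\; \Exp(e(\vL,\vJ)) \;=\; \frac{1}{2}\sum_{i=1}^{m}\bigl[L_i\Exp(J_i) + \Exp(J_{i+1}J_i) - \Exp(J_i^2)\bigr]
\]
(with the convention $J_{m+1}=0$). For $Q(\vL,\vJ) = \sum_i J_i^2 + \sum_i t_i J_i$ a direct expansion gives
\[
\Exp(Q(\vL,\vJ)) \;=\; \sum_{i=1}^{m}\Exp(J_i^2) + \sum_{i=1}^{m} t_i\Exp(J_i).
\]
Next I would substitute $\Exp(J_i)=is_i$ together with
\[
\Exp(J_i^2) = \var(J_i)+\Exp(J_i)^2 = is_i - i^2 u_i + i^2 s_i^2,
\qquad u_i := \sum_{k=i}^{m}\frac{L_k}{(k+1)^2},
\]
and
\[
\Exp(J_{i+1}J_i) = \cov(J_{i+1},J_i)+\Exp(J_{i+1})\Exp(J_i) = is_{i+1} - i(i+1)u_{i+1} + i(i+1)s_is_{i+1},
\]
all read off from \autoref{lem4}.

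The remainder is a careful algebraic simplification, which I expect to be the main (though not conceptual) obstacle. Two tricks should do it. First, the telescoping identity $s_{i+1} = s_i - \frac{L_i}{i+1}$ converts cross-products: $s_is_{i+1} = s_i^2 - \frac{L_i s_i}{i+1}$, so that the quadratic-in-$s$ parts in $\Exp(Y)$ coalesce via
\[
\sum_{i=1}^{m-1} i(i+1)s_is_{i+1} - \sum_{i=1}^{m} i^2 s_i^2 \;\leadsto\; \sum_{i=1}^{m} i s_i^2 + (\text{linear residue in } L),
\]
after carefully handling the endpoint $i=m$ using $s_m = \tfrac{L_m}{m+1}$. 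Second, for the terms involving $u_i$, I would swap the order of summation: for any coefficients $c_i$,
\[
\sum_{i=1}^{m} c_i\sum_{k=i}^{m}\frac{L_k}{(k+1)^j} \;=\; \sum_{k=1}^{m}\frac{L_k}{(k+1)^j}\sum_{i=1}^{k} c_i,
\]
so that $\sum_i i s_i$ and $\sum_i i^2 u_i$ collapse via Gauss's formulae $\sum_{i=1}^k i = \frac{k(k+1)}{2}$ and $\sum_{i=1}^k i^2 = \frac{k(k+1)(2k+1)}{6}$. In particular, for $\Exp(Q(\vL,\vJ))$ the verification reduces to the numerical identity
\[
\frac{k}{2} - \frac{k(2k+1)}{6(k+1)} \;=\; \frac{k(k+2)}{6(k+1)},
\]
matching the stated linear term $\sum_i \frac{i(i+2)}{6(i+1)}L_i$. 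An analogous (slightly more delicate) bookkeeping for $\Exp(Y)$ produces the linear residue $-\frac{1}{4}\sum_i \frac{i}{i+1}L_i$, while the $t_i$-dependent parts appear only in $\Exp(Q(\vL,\vJ))$ because $\Exp(Y|\vJ)$ does not involve $t_i$. No deeper tool than \autoref{lem4} and these manipulations is needed.
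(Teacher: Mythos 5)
Your proposal is correct: expanding $\Exp(Y)=\Exp(e(\vL,\vJ))$ and $\Exp(Q(\vL,\vJ))$ via the first and second moments of $\vJ$ from \autoref{lem4}, then telescoping with $s_{i+1}=s_i-\tfrac{L_i}{i+1}$ (and $u_{i+1}=u_i-\tfrac{L_i}{(i+1)^2}$, valid uniformly for $i=1,\dots,m$ with $s_{m+1}=u_{m+1}=0$) and swapping summation order, does reproduce both stated formulas; I checked that the $i$-th summand of $2\Exp(Y)$ collapses to $is_i^2-iu_i$, giving exactly $\tfrac12\sum_i is_i^2-\tfrac14\sum_i\tfrac{i}{i+1}L_i$. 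This is precisely the "straightforward computation" the paper asserts and omits, so your route coincides with the intended one.
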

		
		\begin{thm}
		\label{thm3}
			Let $\tfrac 1N \vLn \ra \va\neq \vnu$. Then, $\tfrac 1N \Exp(\Jn_i) \ra
i\sum_{k=i}^m \frac{a_k}{k+1}$ for each $i$ and 
			\[
				\frac{\vJn -\Exp(\vJn)}{N^{1/2}}\lrad \mathcal{N}(\vnu,\Sigma) ,
			\]
			where
			\[
\Sigma_{i,j}=\min(i,j)\sum_{k=\max(i,j)}^m\frac{a_k}{k+1}-ij\sum_{k=\max(i,j)}^m\frac{a_k}{(k+1)^2} .
			\]
		\end{thm}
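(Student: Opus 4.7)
The first claim about the asymptotic mean is immediate: Lemma~\ref{lem4} gives the exact formula $\Exp(J_i^{(N)}) = i\sum_{k=i}^m L_k^{(N)}/(k+1)$, so dividing by $N$ and using $L_k^{(N)}/N \to a_k$ yields $\tfrac 1N\Exp(J_i^{(N)}) \to i\sum_{k=i}^m a_k/(k+1)$.

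For the multivariate CLT, the idea is to reduce to the classical multinomial CLT (Theorem~\ref{thm6}) via the linear representation from \autoref{prop2}. First I would fix $k$ and apply Theorem~\ref{thm6} to the occupancy vector $\vB^{(N)}(k)$, which is multinomial with parameters $L_k^{(N)}$ and uniform probabilities $p_j = 1/(k+1)$: this gives
\[
\frac{\vB^{(N)}(k) - L_k^{(N)}\vp_k}{\sqrt{N}} \lrad \mathcal{N}\bigl(\vnu, a_k(\diag(\vp_k) - \vp_k^t\vp_k)\bigr),
\]
where the scaling by $\sqrt{N}$ rather than $\sqrt{L_k^{(N)}}$ absorbs the factor $a_k$; the degenerate case $a_k=0$ simply yields a point mass at the origin. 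Since the sources $\mathcal{S}_1,\ldots,\mathcal{S}_m$ are mutually independent, the joint vector $(\vB^{(N)}(1), \ldots, \vB^{(N)}(m))$, centered and scaled by $\sqrt N$, converges jointly to the product of these normal distributions.

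Next, by \autoref{prop2} we have $J_i^{(N)} = \sum_{k=i}^m A_{k-i+1}^{(N)}(k)$ where each $A_{k-i+1}^{(N)}(k) = B_{k-i+1}^{(N)}(k) + \cdots + B_k^{(N)}(k)$ is a linear functional of $\vB^{(N)}(k)$. Thus $\vJn$ is a fixed (deterministic) linear image of the joint occupancy vector, and the continuous mapping theorem (applied to the linear map) transports the joint Gaussian limit to $(\vJn - \Exp(\vJn))/\sqrt N \lrad \mathcal{N}(\vnu, \Sigma)$ for the appropriate covariance $\Sigma$.

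The remaining step is to verify that this $\Sigma$ matches the stated one. Using the independence of the sources across $k$,
\[
\cov(J_i^{(N)}, J_j^{(N)}) = \sum_{k=\max(i,j)}^m \cov\bigl(A_{k-i+1}^{(N)}(k), A_{k-j+1}^{(N)}(k)\bigr),
\]
and for the uniform multinomial $\vB^{(N)}(k)$ the covariance of two partial sums over index sets of sizes $i$ and $j$ whose intersection (the top $\min(i,j)$ letters) has size $\min(i,j)$ equals $L_k^{(N)}\bigl(\min(i,j)/(k+1) - ij/(k+1)^2\bigr)$. Dividing by $N$ and passing to the limit gives exactly $\Sigma_{i,j}$ as claimed. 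The routine computation here essentially coincides with the covariance formula already recorded in \autoref{lem4}, so no new work is required; the only subtlety to watch is the handling of indices $k$ for which $a_k = 0$, which is harmless since such terms contribute trivially both in the CLT and in the covariance formula.
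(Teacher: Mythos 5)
Your argument is correct and follows essentially the same route as the paper: the asymptotic mean from Lemma~\ref{lem4}, the multinomial CLT of Theorem~\ref{thm6} applied to the independent occupancy vectors $\vB^{(N)}(k)$, the observation that $\vJn$ is a deterministic linear image of the total occupancy statistic via \autoref{prop2}, and the covariance identification via the formula $\cov(A_{k-i+1}(k),A_{k-j+1}(k))=\tfrac{L_k}{k+1}\bigl(\min(i,j)-\tfrac{ij}{k+1}\bigr)$ recorded before Lemma~\ref{lem4}. You in fact spell out the steps (joint convergence, continuous mapping, degenerate components with $a_k=0$) that the paper leaves as ``clear'' or ``obvious.''
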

		
		\begin{proof}
			The first assertion is clear. For the second assertion let $\vB_{\vLn}$ denote the underlying
total occupancy statistic. In \autoref{sec1} it was shown that the components $\vB(i)^N$ are indendent multinomial
distributions with parameters $\Ln_i$ and $\vu(i)$, where $u(i)_0=\ldots=u(i)_i=\frac{1}{i+1}$, and covariances
$\Ln_i \Sigma(i)$, $\Sigma(i)=\diag(\vu(i))-\vu^t(i)\vu(i)$. By \autoref{thm6}, and since $\tfrac 1N \Ln_i \ra
a_i$, we have
			\[
				\frac{\vB_{\vLn}-\Exp(\vB_{\vLn})}{N^{1/2}} \lrad \big(\mathcal{N}(\vnu,a_1\Sigma(1)),\ldots,\mathcal{N}(\vnu,a_m\Sigma(m))\big)
			\]
			where the components on the right-hand side are independent. Since $\vJn$ is a linear image of
$\vBn_\vL$ it is clear that $\vJn$ is asymptotically normal. The assertion about the covariance matrix is obvious.
		\end{proof}
		
		It is clear from \autoref{cor1} that under the conditions of \autoref{thm3} the number of admissible partitions is
		asymptotically normal, with expectation of order $N^2$ and variance of order $N^3$. The final condition
of \autoref{prop1} is fulfilled since the coordinates of the $\vB(i)$ are binomially distributed and therefore have
central fourth moments of order $L_i^2$. The boundedness condition of \autoref{lemvarr} is fulfilled since $\Jn_i\leq
\sum_{k=1}^{m} \Ln_k$.

	\subsection{The central restricted case}
	\label{sec:central-restricted}
	
	We consider the ''central region`` as discussed in \autoref{sec:prob-setup-restricted}.
	Let $s_N=\lfloor\Exp(S_{\vLn})\rfloor=\lfloor\tfrac 12 \sum_{i=1}^m iL_i\rfloor$. It is clear from the above
that the underlying occupancy distribution  $\vB_{\vLn}$ is the conditional distribution
	\[
	  \vB_{\vLn}=\big(\vY(1)^N,\ldots,\vY(m)^N\big)|\sum_{k=1}^m\sum_{i=0}^kY_i(k)^N=s_N
	  ,
	\]
	where $\vY(1)^N,\ldots,\vY(m)^N$ are independent random vectors, each $\vY(k)^N$ is multinomial with
parameters $\Ln_k$ and $p_0=\ldots=p_k=\tfrac{1}{k+1}$. This conditioning has the following effect on the
asymptotic distribution.

	\begin{thm}
	\label{thm7}
		Let $m>1$, $s_N=\lfloor \Exp(S_{\vLn})\rfloor$ and $\vu=(\vu(1),\vu(2), \ldots ,\vu(m))$ with
$\vu(k)=(\frac{1}{k+1},\ldots,\frac{1}{k+1})$. Assume that $\tfrac 1N \vLn \lra \va\not=\vnu$ and let
$\sigma^2(\va)=\tfrac{1}{12}\sum_{k=1}^m k(k+2)a_k$. Then,
		\[
		  \frac{\vBn-N\vu}{\sqrt{N}}\lrad \mathcal{N}(0,\Sigma) ,	
		\]
		where
		\begin{align}
		\label{eqsig}
		  \Sigma_{i,j}(k,\ell) & =a_k\delta_{k,\ell}\Big(\frac{1}{k+1}\delta_{i,j}-\frac{1}{(k+1)^2}\Big)  \\
		    \notag & \quad \quad -a_ka_\ell\frac{(k-2i)(\ell-2j)}{4(k+1)(\ell+1)\sigma^2(\va)} .
		\end{align}
	\end{thm}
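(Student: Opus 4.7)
The strategy is to deduce this conditional CLT from two ingredients: the unconditional joint CLT for the pair $(\vYn, S_\vLn)$, which by Gaussian conditioning predicts the exact form of the limiting covariance $\Sigma$, and a local limit theorem for the integer-valued statistic $S_\vLn$, which turns the heuristic conditioning into a rigorous limit.

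Since $\vYn = (\vY(1)^{(N)}, \ldots, \vY(m)^{(N)})$ has independent multinomial blocks, \autoref{thm6} applied blockwise yields $(\vYn - N\vu)/\sqrt{N} \lrad \mathcal{N}(\vnu, \Sigma_0)$, with $\Sigma_0$ block-diagonal and $\Sigma_0(k)_{i,j} = a_k(\tfrac{1}{k+1}\delta_{i,j} - \tfrac{1}{(k+1)^2})$ --- exactly the first term of \eqref{eqsig}. Because $S_\vLn = \sum_{k,i} i\, Y_i(k)^{(N)}$ is linear in $\vYn$, the pair $(\vYn, S_\vLn)$ is jointly asymptotically normal, with cross-covariance and variance computed directly from the multinomial formulas as
\[
\tfrac{1}{N}\cov(Y_i(k)^{(N)}, S_\vLn) \lra -a_k \tfrac{k-2i}{2(k+1)}, \qquad \tfrac{1}{N}\var(S_\vLn) \lra \sigma^2(\va).
\]
The standard Gaussian-conditioning identity then predicts the conditional limiting covariance to be $\Sigma_0 - \vc \vc^t / \sigma^2(\va)$ with cross-covariance vector $\vc_{(k,i)} = -a_k(k-2i)/(2(k+1))$, and its $(k,i),(\ell,j)$-entry is precisely the second term of \eqref{eqsig}. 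Thus the target $\Sigma$ coincides with the Gaussian conditioning of the joint unconditioned limit.

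To turn the prediction into a proof, apply the classical Gnedenko local CLT to $S_\vLn$: as a sum of independent, bounded, integer-valued variables, non-degenerate with lattice span $1$ under $m > 1$ and $\va \ne \vnu$, it satisfies $\sqrt{N}\,\Prob(S_\vLn = s_N) \to (2\pi\sigma^2(\va))^{-1/2}$. For any bounded continuous test function $f$, write the conditional expectation as
\[
\Exp\bigl[f((\vBn - N\vu)/\sqrt{N})\bigr] = \frac{\Exp\bigl[f((\vYn - N\vu)/\sqrt{N})\,\mathbf{1}_{\{S_\vLn = s_N\}}\bigr]}{\Prob(S_\vLn = s_N)},
\]
use Fourier inversion $\mathbf{1}_{\{S_\vLn = s_N\}} = (2\pi)^{-1}\int_{-\pi}^{\pi} e^{it(S_\vLn - s_N)}\,dt$ to reduce both numerator and denominator to integrals of the joint characteristic function of $(\vYn, S_\vLn)$, and extract the Gaussian limit via a second-order Taylor expansion near $t = 0$. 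The main obstacle is the Fourier-tail estimate: one must uniformly bound $|\Exp[\exp(i\langle \vs, (\vYn - N\vu)/\sqrt{N}\rangle + it S_\vLn)]| \le e^{-cN}$ for $|t|$ bounded away from $0$ in $[-\pi, \pi]$, so that only an $O(N^{-1/2})$-neighbourhood of $t = 0$ contributes. This is the classical Cramer-type condition underlying the local CLT, and it can be verified directly from the strict sub-Gaussianity of the individual multinomial blocks, at least one of which is of size linear in $N$ under the hypotheses $m > 1$ and $\va \ne \vnu$.
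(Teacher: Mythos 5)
Your proposal is correct, and it reaches \eqref{eqsig} by a genuinely different route than the paper. The paper works with the explicit formula for the conditioned occupancy probabilities, $\Prob(\vBn=\vk)=\binom{N}{k_0,\ldots,k_m}/\big((m+1)^N\Prob(S_N=s_N)\big)$, applies Stirling's approximation to the multinomial coefficient in the numerator and the lattice local limit theorem to the denominator, and then verifies by hand that the resulting pointwise limit is the marginal density of $\mathcal{N}(\vnu,\Sigma)$ with $\Sigma$ as in \eqref{eqsig}; it does this only in the one-component case $\vLn=(\mathbf{0},N)$ and asserts the general case is analogous. You instead prove weak convergence directly: the unconditioned blockwise CLT of \autoref{thm6} plus the linearity of $S_\vLn$ in $\vYn$ gives joint asymptotic normality of $(\vYn,S_\vLn)$, the Gaussian-conditioning identity $\Sigma_0-\vc\vc^t/\sigma^2(\va)$ produces \eqref{eqsig} without any density matching, and the Fourier inversion of $\mathbf{1}_{\{S_\vLn=s_N\}}$ together with the Gnedenko local CLT and the Cram\'er-type tail bound on the joint characteristic function rigorizes the conditioning. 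Your cross-covariance $\cov(Y_i(k),S_\vLn)/N\to -a_k(k-2i)/(2(k+1))$ and $\var(S_\vLn)/N\to\sigma^2(\va)$ are correct, and the tail bound does hold since at least one block has $L_k^{(N)}$ of order $N$ and the span of $S_\vLn$ is $1$. What each approach buys: the paper's argument yields a local limit theorem for the conditioned occupancy vector as a byproduct (pointwise convergence of $N^{(m-1)/2}\Prob(\vBn=\vk)$ to the Gaussian density), which is relevant to the conjectures of \autoref{sec:loc-central-limit}; your argument identifies $\Sigma$ structurally rather than by inspection, handles the general multi-component case uniformly, and avoids the (glossed-over) step of passing from pointwise density convergence to weak convergence. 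One cosmetic caveat: the centering $N\vu$ in the statement of \autoref{thm7} should really be $\Exp(\vBn)$ (the block means are $L_k^{(N)}/(k+1)\approx Na_k/(k+1)$, not $N/(k+1)$); your covariance computations are consistent with the correct centering, so this does not affect the argument.
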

	\begin{proof}
	  For ease of exposition we give the proof for the one component case $\vLn = (\Ln_1,\ldots,\Ln_{m-1}
	  ,\Ln_m) = (0,\ldots,0,N)$, the generalization is straightforward. Let $\vBn:=\vB(m)^N$, and
$X_1,\ldots,X_N$ be i.i.d.~random variables uniform on $\{0,\ldots,m\}$. Let $S_N:=\sum_{i=1}^N X_i$, $\mu=m/2$,
$\sigma^2=\tfrac{1}{12} m(m+2)$.
	  The probability generating function of $\vBn$ is
	  \[
	    \Exp(\prod_{i=0}^m t_i^{\Bn_i})=[x^{s_N}] (t_0+t_1x+\ldots + t_mx^m)^N/\Big((m+1)^n \Prob(S_N=s_N)\Big)
	    .
	  \]
          Hence the joint distribution is given by 
	  \[ 
	    \Prob(\Bn_0=k_0,\ldots,\Bn_m=k_m)={N\choose k_0,\ldots,k_m}/\big((m+1)^N\Prob(S_N=s_N)\big)
	  \]
	  with the constraints that $\sum_{i=0}^m k_i=N$ and $\sum_{i=1}^nik_i=s_N$. Since there are two linearly
independent linear constraints on the values of $\vBn$ we expect a $(m-1)$-dimensional limiting distribution. Let
$x_0,\ldots,x_m$ be real numbers with $\sum_{i=0}^m x_i=0$ and $\sum_{i=0}^m ix_i=0$, and let
$k_i=\tfrac{N}{i+1}+\sqrt{N}x_i$. By Stirling's approximation for the factorials for the numerator and the local limit
theorem for lattice distributions for the denominator we see
	  \[
	    (\sqrt{N})^{m-1}\Prob(\vBn=\vk)\lra \tfrac{1}{\sqrt{(2\pi)^{m-1}}}\sqrt{(m+1)^{m+1}\sigma^2 }
e^{-\tfrac{m+1}{2}(x_0^2+\ldots+x_m^2)}
	    .
	  \]
          A check that the expression on the right-hand side (considered as a function of $x_2,\ldots,x_m$, say) is
the marginal density of $(\mathcal{N}(0,\Sigma))_{2,\ldots,m}$ with $\Sigma$ as in \ref{eqsig} concludes the proof.
	\end{proof}
   
        For the convergence of moments we have
        \begin{prp}
        \label{prp:conv-moments-central-strings}
	  Under the conditions of \autoref{thm7} 
	  \[
	    \frac{\Exp(\vB(k)^N)}{N}\lra a_k\vu(k)\;\;\;\mbox{ and
}\;\;\frac{\cov(B_i(k)^N,B_j(\ell)^N)}{N}\lrad \Sigma_{i,j}(k,\ell)
	  \]
	  Furthermore, $\Exp\big(\vB_i(k)^N-\Exp(\vB_i(k)^N)\big)^4/N^3\lra 0$.
        \end{prp}
        
        \begin{proof}
             Again we restrict the exposition to the one component case and use the same notation as in the proof of
\autoref{thm7}. From the generating function given there we get
			\begin{align} 
			\Exp(\Bn_i)&=\frac{N}{m+1}\frac{\Prob(S_{N-1}=s_N-i)}{\Prob(S_N=s_N)}, \label{eqerw}\\
	\Exp(\Bn_i)^2&=\Exp(\Bn_i)+\frac{N(N-1)}{(m+1)^2}\frac{\Prob(S_{N-2}=s_N-2i)}{\Prob(S_N=s_N)}, \\
			\Exp(\Bn_i\Bn_j)&=\frac{N(N-1)}{(m+1)^2}\frac{\Prob(S_{N-2}=s_N-i-j)}{\Prob(S_N=s_N)} .
			\end{align}
                        By the local central limit theorem for lattice distributions \cite[Corollary VIII.3]{1165.05001}
we have
                           \[\Prob(S_N=k)=\frac{1}{\sqrt{2\pi
N}}e^{-\frac{(k-N\mu)^2}{2N\sigma^2}}\Big(1+O(N^{-1/2})\Big)\]
                        if $|\frac{(k-N\mu)^2}{2N\sigma^2}|<C$.
                   Applying this to the numerator and denominator shows
                        that the quotients $q_r(N):=\frac{\Prob(S_{N-r}=s_N-ri)}{\Prob(S_N=s_N)}$ are asymptotically of
the form
                        $q_r(N)=1-\frac{c_r}{2\sigma^2N}+O(N^{-3/2})$ where $c_r=r^2(m/2-i)^2$.
			Now, the asympotic assertion about the expectation follows immediately from the local limit
theorem, applied to numerator and denominator in \ref{eqerw}. The asymptotic assertion about the variance/covariance
follows from the formul\ae\;above using  the asymptotic form of $q_1(N),q_2(N)$. Concerning the asserted
convergence of the central fourth moment, note that the $r-th$ factorial moment of $\Bn_i$ is
                    \begin{align*}
		      & \Exp \big( \Bn_i(\Bn_i-1)\cdots (\Bn_i-r+1) \big)\\
		      & \quad \quad =\frac{N(N-1)\cdots
(N-r+1)}{(m+1)^r}\frac{\Prob(S_{N-r}=s_N-ri)}{\Prob(S_N=s_N)}
		      \end{align*}
                 After expressing the central fourth moment as a linear combination of factorial moments, plugging in
the asymptotic expressions for the $q_r(N)$, and noting that $c_4+6c_2-4c_3-4c_1=0$, one obtains
                           \[
                           		\Exp\big(\Bn_i-\Exp(\Bn_i)^4 \big)= N^3 (c_4+6c_2-4c_3-4c_1) +O(N^{5/2})=O(N^{5/2}) . 
		\qedhere
		\]
                 \end{proof}
		
		\begin{rem}
		\label{rem:asymptotic-moments-restricted}
		  Let $\tfrac 1N \vLn \ra \va\not=\vnu$.
		  A comparison to the unrestricted case, discussed in \autoref{sec2}, shows that asymptotically
the underlying total occupancy distributions are quite similar. They concentrate around the same expectations. In the
unrestricted case the components of the limiting distribution $\tfrac{1}{\sqrt{N}}(\vB_{\vLn}-\vu) \lrad Z$ are
independent normal vectors with
		  \[
		    \cov(Z(k))= a_k(\diag(\vu(k))-\vu(k)^t\vu(k))
		    .
		  \]
		  The components stay normal in the central restricted case, but the restriction causes an additional
negative correlation
		  \[
		    \cov(Z(k)_i,Z(\ell)_j)=-\frac{a_ka_\ell (2k-i)(2\ell-j)}{4 (k+1)(l+1)\sigma^2(\va)}
		  \]
		  between the components. This in turn forces the elements of the asymptotic covariance $\Sigma$ of
$\vJn$ to be smaller than in the unrestricted case, we compute
		  \[
		    \Sigma_{i,j,\mathrm{restricted}}=\Sigma_{i,j,\mathrm{unrestricted}}-\tfrac{ij}{2 \sigma^2(\va)}
c(i) c(j)
		  \]
		  with $c(i)=\sum_{k=i}^m\frac{k+1-i}{k+1} a_k$.
		\end{rem}

		Since $\vJn$ is a linear image of $\vBn$ its distribution is also asymptotically normal and it is clear
from \autoref{thm7} and \autoref{cor1} that $\Tn$ is asymptotically normal and the preceding results show that the
expectation resp. variance of $\Tn$ are  of $N^2$ resp.  $N^3$, but the variance in the restricted case will (on the
$N^3$ scale) be smaller than in the unrestricted case.

		\begin{rem}
	 		The cases $\tfrac 1N S_{\vLn} \ra a^\prime\not=a{:=\frac 12 \sum_{k=1}^m ka_k}$  can be treated by standard large deviation techniques. Again we may safely expect to find normality of the asymptotic distributions.
		\end{rem}

\section{Interpretation in terms of fusion and Demazure modules}

In the sequel we will consider characters of fusion modules of the current algebra defined by Feigin an Loktev in \cite{MR1729359}, and Demazure modules of the corresponding affine algebra. As mentioned in the introduction those characters can be interpreted as Hilbert series whose coefficients encode dimensions of so-called weight spaces. We will be mostly concerned with the current algebra $\mathfrak{sl}_2 \otimes \mathbf{C}[t]$ and the closely related affine Kac-Moody algebra $\widehat{\mathfrak{sl}}_2$. For further reading we refer the reader to \cite{MR2271991,MR1988973,MR2072647,MR2323538,MR2047177}, and for general facts about current and affine Kac-Moody algebras and their representation theory to \cite{MR2188930,MR1104219}.

\subsection{Fusion modules}
	Let us consider fusion modules for the current algebra $\mathfrak{sl}_2 \otimes \mathbf{C}[t]$ \cite{MR1988973,MR2072647}:
		\begin{align}
			\mathcal{F}(L_1,\ldots,L_m) = (\mathbf{C}^2)^{\ast L_1} \ast \cdots \ast (\mathbf{C}^{m+1})^{\ast L_m} .
		\end{align}
	    Feigin and Feigin prove in \cite[Theorem 5.1]{MR1988973} that, in terms of the $q$-supernomial $\tilde{T}({\bf L},a)(q)$ from \eqref{eq:q-supernomial-modified}, its graded character is given as
		\begin{align}
		\label{eq:character-fusion-T-supernomials}
			\chi (\mathcal{F}(L_1,\ldots,L_m))(z,q) = \sum_{a \in \mathbf{Z}} z^a \cdot \tilde{T}({\bf L},a)(q) .
		\end{align}
		\begin{figure}
		\label{plot of fusion}
		\begin{tikzpicture}[scale=0.7]
			\draw [->,gray] (-2.6,0) -- (2.5,0) node [right,black] {$z^a$};
			\draw [<-,gray] (-2.5,-17) node [below,black] {$q^i$} -- (-2.5,0.1);
			
			%Markierungen y-Achse
			\draw [gray] (-2.6,-0.5) node [left,black] {$0$} -- (-2.4,-0.5);
			\draw [gray] (-2.6,-1) node [left,black] {$1$} -- (-2.4,-1);
			\draw [gray] (-2.6,-1.5) node [left,black] {$2$} -- (-2.4,-1.5);
			\draw [gray] (-2.6,-2) node [left,black] {$3$} -- (-2.4,-2);
			\draw [gray] (-2.6,-2.5) node [left,black] {$4$} -- (-2.4,-2.5);
			\draw [gray] (-2.6,-3) node [left,black] {$5$} -- (-2.4,-3);
			\draw [gray] (-2.6,-3.5) node [left,black] {$6$} -- (-2.4,-3.5);
			\draw [gray] (-2.6,-4) node [left,black] {$7$} -- (-2.4,-4);
			\draw [gray] (-2.6,-4.5) node [left,black] {$8$} -- (-2.4,-4.5);
			\draw [gray] (-2.6,-5) node [left,black] {$9$} -- (-2.4,-5);
			\draw [gray] (-2.6,-5.5) node [left,black] {$10$} -- (-2.4,-5.5);
			\draw [gray] (-2.6,-6) node [left,black] {$11$} -- (-2.4,-6);
			\draw [gray] (-2.6,-6.5) node [left,black] {$12$} -- (-2.4,-6.5);
			\draw [gray] (-2.6,-7) node [left,black] {$13$} -- (-2.4,-7);
			\draw [gray] (-2.6,-7.5) node [left,black] {$14$} -- (-2.4,-7.5);
			\draw [gray] (-2.6,-8) node [left,black] {$15$} -- (-2.4,-8);
			\draw [gray] (-2.6,-8.5) node [left,black] {$16$} -- (-2.4,-8.5);
			\draw [gray] (-2.6,-9) node [left,black] {$17$} -- (-2.4,-9);
			
			\draw [gray] (-2.6,-9.5) node [left,black] {$18$} -- (-2.4,-9.5);
			\draw [gray] (-2.6,-10) node [left,black] {$19$} -- (-2.4,-10);
			\draw [gray] (-2.6,-10.5) node [left,black] {$20$} -- (-2.4,-10.5);
			\draw [gray] (-2.6,-11) node [left,black] {$21$} -- (-2.4,-11);
			\draw [gray] (-2.6,-11.5) node [left,black] {$22$} -- (-2.4,-11.5);
			\draw [gray] (-2.6,-12) node [left,black] {$23$} -- (-2.4,-12);
			\draw [gray] (-2.6,-12.5) node [left,black] {$24$} -- (-2.4,-12.5);
			\draw [gray] (-2.6,-13) node [left,black] {$25$} -- (-2.4,-13);
			\draw [gray] (-2.6,-13.5) node [left,black] {$26$} -- (-2.4,-13.5);
			\draw [gray] (-2.6,-14) node [left,black] {$27$} -- (-2.4,-14);
			\draw [gray] (-2.6,-14.5) node [left,black] {$28$} -- (-2.4,-14.5);
			\draw [gray] (-2.6,-15) node [left,black] {$29$} -- (-2.4,-15);
			\draw [gray] (-2.6,-15.5) node [left,black] {$30$} -- (-2.4,-15.5);
			\draw [gray] (-2.6,-16) node [left,black] {$31$} -- (-2.4,-16);
			\draw [gray] (-2.6,-16.5) node [left,black] {$32$} -- (-2.4,-16.5);
			%Markierungen x-Achse
			\draw [gray] (0,-0.1) node [above=4pt,black] {$4$} -- (0,0.1);
			\draw [gray] (0.5,-0.1) node [above=4pt,black] {$5$} -- (0.5,0.1);
			\draw [gray] (1,-0.1) node [above=4pt,black] {$6$} -- (1,0.1);
			\draw [gray] (1.5,-0.1) node [above=4pt,black] {$7$} -- (1.5,0.1);
			\draw [gray] (2,-0.1) node [above=4pt,black] {$8$} -- (2,0.1);
			\draw [gray] (-0.5,-0.1) node [above=4pt,black] {$3$} -- (-0.5,0.1);
			\draw [gray] (-1,-0.1) node [above=4pt,black] {$2$} -- (-1,0.1);
			\draw [gray] (-1.5,-0.1) node [above=4pt,black] {$1$} -- (-1.5,0.1);
			\draw [gray] (-2,-0.1) node [above=4pt,black] {$0$} -- (-2,0.1);

			\draw (0,-4.5) node {$1$};
			\draw (0,-5) node {$1$};
			\draw (0,-5.5) node {$3$};
			\draw (0,-6) node {$3$};
			\draw (0,-6.5) node {$4$};
			\draw (0,-7) node {$3$};
			\draw (0,-7.5) node {$2$};
			\draw (0,-8) node {$1$};
			\draw (0,-8.5) node {$1$};
	
			\draw (0.5,-7) node {$1$};
			\draw (0.5,-7.5) node {$2$};
			\draw (0.5,-8) node {$3$};
			\draw (0.5,-8.5) node {$3$};
			\draw (0.5,-9) node {$3$};
			\draw (0.5,-9.5) node {$2$};
			\draw (0.5,-10) node {$1$};
			\draw (0.5,-10.5) node {$1$};
	
			\draw (-0.5,-3) node {$1$};
			\draw (-0.5,-3.5) node {$2$};
			\draw (-0.5,-4) node {$3$};
			\draw (-0.5,-4.5) node {$3$};
			\draw (-0.5,-5) node {$3$};
			\draw (-0.5,-5.5) node {$2$};
			\draw (-0.5,-6) node {$1$};
			\draw (-0.5,-6.5) node {$1$};
	
			\draw (1,-9.5) node {$1$};
			\draw (1,-10) node {$1$};
			\draw (1,-10.5) node {$2$};
			\draw (1,-11) node {$2$};
			\draw (1,-11.5) node {$2$};
			\draw (1,-12) node {$1$};
			\draw (1,-12.5) node {$1$};
	
			\draw (-1,-1.5) node {$1$};
			\draw (-1,-2) node {$1$};
			\draw (-1,-2.5) node {$2$};
			\draw (-1,-3) node {$2$};
			\draw (-1,-3.5) node {$2$};
			\draw (-1,-4) node {$1$};
			\draw (-1,-4.5) node {$1$};
	
			\draw (1.5,-13) node {$1$};
			\draw (1.5,-13.5) node {$1$};
			\draw (1.5,-14) node {$1$};
			\draw (1.5,-14.5) node {$1$};
	
			\draw (-1.5,-1) node {$1$};
			\draw (-1.5,-1.5) node {$1$};
			\draw (-1.5,-2) node {$1$};
			\draw (-1.5,-2.5) node {$1$};
	
			\draw (2,-16.5) node {$1$};
	
			\draw (-2,-0.5) node {$1$};
		\end{tikzpicture}
		\caption{Plot of the character of $\mathcal{F}(0,4) = (\mathbf{C}^3)^{\ast 4}$, i.e.~$\chi(\mathcal{F}(0,4))(z,q) = \sum_{a \in \mathbf{Z}} z^a \cdot \sum_{j_1 + j_2 = a} q^{j_1^2 + j_2^2} \qbinom{4}{j_2}_q \qbinom{j_2}{j_1}_q$.}
		\end{figure}

Here $q$ refers to the grading as introduced by Feigin and Loktev \cite{MR1729359}.
	The specialization at $q=1$ of the graded character $\chi(\mathcal{F}(L_1, \ldots , L_m))(z,q)$
equals the character of the tensor product $(\mathbf{C}^2)^{\otimes L_1} \otimes
\cdots \otimes (\mathbf{C}^{m+1})^{\otimes L_m}$ of irreducible representations of $\mathfrak{sl}_2$, i.e.~the variable $z$ associates with the grading by the simple root $\alpha_1$ of $\mathfrak{sl}_2$. Since those characters multiply, i.e.~$\chi(V \otimes W) = \chi(V) \cdot \chi(W)$, the associated probability distributions convolute and asymptotic considerations reduce to the central limit theorem for sums of i.i.d.~random variables. Therefore, the specializations at $q=1$ are well understood from a statistical point of view and have been analyzed in great detail, e.g.~\cite{1062.22026}.	Much less studied is the so-called basic specialization\footnote{We borrow this terminology from Kac \cite[\S 1.5, 10.8, 12.2]{MR1104219} who analyzed this kind of specialization for characters of integrable highest weight modules $V(\Lambda)$, and obtained Macdonald's identities for Dedekind's $\eta$-function \cite[\S 12.2]{MR1104219}.} of those characters, i.e.~their evaluation at $z=1$. Now, we have
		\begin{thm}
		\label{thm:central-limit-basic-spec-fusion}
			Consider a sequence $(\mathbf{C}^2)^{\ast L_1^N} \ast \cdots \ast (\mathbf{C}^{m+1})^{\ast
L_m^N}$ of fusion modules of the current algebra $\mathfrak{sl}_2 \otimes \mathbf{C}[t]$. If $\tfrac 1N (L_1^N ,
\ldots , L_m^N) \ra {\bf a} \neq 0$, then the sequence of basic specializations, that is $\sum_{a \in \mathbf{Z}} \tilde{T}((L_1^N , \ldots , L_m^N),a)(q)$, behaves asymptotically normal with individual means
			\begin{align}
			\label{eq:mean-unr}
			  \mu_{(L_1^N , \ldots , L_m^N)} & = 
			    \sum_{i=1}^m \Bigg[ \Big( \frac i2 + i^2 \Big) \Big( \sum_{k=i}^{m} \frac{L_k^N}{k+1} \Big)^2
\\
			    \notag & \quad \quad + i \Big(  \sum_{k=i}^{m} \frac{L_k^N}{k+1} \Big) \Big(
\sum_{k=1}^{i-1}L_k^N \Big) + \Big(\frac{4i(i+2)-6i}{24(i+1)} \Big) L_i^N \Bigg],
			\end{align}
			and variance, as $N\ra\infty$,
			\begin{align}
			\label{eq:conv-var}
				\frac{1}{N^3} \sigma_{(L_1^N , \ldots , L_m^N)}^2 & \ra \tfrac{1}{4}\vf\Sigma \vf^t + v(\va,\vb).
			\end{align}
			Here, the vectors $\va$, $\vb$, $\vf$, the function $v$, and the matrix $\Sigma$ are given by
			\begin{align*}
				\va		& = (a_1,\ldots , a_m) = \lim_{N \ra \infty}\frac 1N (L_1^N , \ldots , L_m^N) , \\
				b_i		& = i \sum_{\ell = i}^m \frac{a_\ell}{\ell+1} , \\
				f_i		& = a_i+b_{i+1}+b_{i-1}+2b_i+2\sum_{\ell=1}^{i-1}a_\ell , \\
				v(\va,\vb)	& = \frac{1}{12}\sum_{i=1}^m(a_i+b_{i+1}-b_{i})b_i(a_i+b_{i+1}) \mbox{ (where $b_{m+1}=0$)} ,\\
				\Sigma_{i,j}	&
= \min(i,j)\sum_{k=\max(i,j)}^m\frac{a_k}{k+1}-ij\sum_{k=\max(i,j)}^m\frac{a_k}{(k+1)^2} .
			\end{align*}
		\end{thm}
		\begin{proof}
			Recall from \eqref{eq:prob-gen-fctn-unrestricted-no-parts} that the basic specialization is the generating function for the distribution of the random variable $\Tn=Q(\vLn,\vJn)+\Yn$ from \eqref{eq:T}. As such the mean $\mu_{(L_1^N , \ldots , L_m^N)}$ is equal to $\Exp(\Tn)$ as described in \autoref{lem5}. The assertions about the asymptotic normality and the variance are recollections from \autoref{thm2} and \autoref{cor1} which apply in the present situation due to \autoref{thm3}.
		\end{proof}
		
		The central string functions in fusion modules behave quite similar to the basic specialization. The following is just a recollection of the results in \autoref{sec:central-restricted}, and in particular the observations noted in \autoref{rem:asymptotic-moments-restricted}.

		\begin{thm}
		\label{thm:central-limit-strings-fusion}
		  Consider a sequence $(\mathbf{C}^2)^{\ast L_1^N } \ast \cdots \ast (\mathbf{C}^{m+1})^{\ast
L_m^N}$ of fusion modules of the current algebra $\mathfrak{sl}_2 \otimes \mathbf{C}[t]$. If $\tfrac 1N (L_1^N ,
\ldots , L_m^N) \ra {\bf a} \neq 0$, then the sequence of central string functions, that is $\tilde{T}((L_1^N , \ldots ,
L_m^N),s_N)(q)$ with $s_N = \lfloor\tfrac 12 \sum_{i=1}^m iL_i^N\rfloor$, behaves asymptotically normal with
asymptotic mean
			\begin{align}
			\label{eq:mean-re}
				& \frac{1}{N^2} \mu_{(L_1^N , \ldots , L_m^N),s_N} \ra \\
				 \notag & \quad \quad \sum_{i=1}^m \Bigg[ \Big( \frac i2 + i^2 \Big) \Big(
\sum_{k=i}^{m} \frac{a_k}{k+1} \Big)^2 + i \Big(  \sum_{k=i}^{m} \frac{a_k}{k+1} \Big) \Big(
\sum_{k=1}^{i-1}a_k \Big) \Bigg],
			\end{align}
			and asymptotic variance
			\begin{align}
			\label{eq:conv-var-central}
				\frac{1}{N^3} \sigma_{(L_1^N , \ldots , L_m^N), s_N}^2 & \ra
\tfrac{1}{4}\vf\Sigma \vf^t + v(\va,\vb).
			\end{align}
			Here, the vectors $\va$, $\vb$, $\vf$ and the function $v$ are as in \autoref{thm:central-limit-basic-spec-fusion}. The matrix $\Sigma$ is given as
			\begin{align*}
				\Sigma_{i,j}	& = -\frac{ij}{2 \sigma^2(\va)} c(i) c(j) +
\min(i,j)\sum_{k=\max(i,j)}^m\frac{a_k}{k+1}-ij\sum_{k=\max(i,j)}^m\frac{a_k}{(k+1)^2}
			\end{align*}
			where $\sigma^2(\va) = \tfrac{1}{12}\sum_{k=1}^m k(k+2)a_k$, and
$c(i) = \sum_{k=i}^m\frac{k+1-i}{k+1} a_k$.
		\end{thm}
	
	The following corresponding local central limit theorems should hold.
	
	\begin{cnj}
		\label{thm:loccentral-modules}
			In the notation of \autoref{thm:central-limit-basic-spec-fusion} let $X_{\mathcal{F}_N}$ denote a random variable with probability generating function the normalized basic specialization of the fusion module $\mathcal{F}_N = \mathcal{F}(\Ln_1 , \ldots , \Ln_m)$. Denote its mean $\mu_N = \mu_{(\Ln_1 , \ldots , \Ln_m)}$ and variance $\sigma^2_N = \sigma_{(\Ln_1 , \ldots , \Ln_m)}^2$. Then, uniformly in $k$ as $N \rightarrow \infty$,
			\begin{align}
			\label{eq:loc-central}
				\sqrt{2\pi} \sigma_N \cdot \Prob (X_{\mathcal{F}_N} = k) =
e^{-(k-\mu_N) / 2\sigma^2_N} + o(1) .
			\end{align}
			Here, $\sigma^2_N$ can be replaced by the explicit expression $N^3 (\tfrac{1}{4}\vf\Sigma
\vf^t + v(\va,\vb))$ from \eqref{eq:conv-var}.
			In particular, the dimension of the $\mathfrak{sl}_2$ submodule in $\mathcal{F}_N$ of degree $k$ grows as \eqref{eq:loc-central}.
		\end{cnj}
		
		\begin{cnj}
		\label{thm:loccentral-weights}
			In the notation of \autoref{thm:central-limit-strings-fusion} let $S_N$ be a random variable with
probability generating function the normalized central string function
			\[
			  \Exp (q^{S_N}) = q^{-\tfrac 12 \lN_1 \lN_m} \tilde{T}(\vLn,s_N)
			  ,
			\]
			where $\lN_1 = \sum \Ln_i$ and $\lN_m = \sum i \Ln_i$.
			Let
			\begin{align*}
			  \mu & = \lim_{N \ra \infty} \tfrac{1}{N^2} \Big( \mu_{(\Ln_1, \ldots , \Ln_m),s_N} -
\tfrac 12 \lN_1 \lN_m \Big) , \\
			  \sigma^2 & = \lim_{N \ra \infty} \tfrac{1}{N^3} \Big(\sigma^2_{(\Ln_1, \ldots , \Ln_m),s_N} \Big) .
			\end{align*}
		Then, uniformly in $k$ as $N \ra \infty$,
			\begin{align}
			\label{eq:loc-central-restricted}
				\sqrt{2\pi} \sigma \cdot \Prob (S_N = k) = e^{-(k-\mu) / 2\sigma^2} +
o(1)
			\end{align}
			In particular, the dimension of the weight space with coordinates $\tfrac 12 \lN_m\alpha_1$ and $-k\delta$ grows as \eqref{eq:loc-central-restricted}.
		\end{cnj}
		
		Complemented by a result on the asymptotic normality of the basic specialization of
graded tensors of the type $A$ standard representation (see \cite{bk11flags}) we have a central limit theorem for a serious class of graded tensors. We will conclude with a general conjecture on the fusion modules of symmetric power representations for the current algebra $\mathfrak{sl}_r \otimes \mathbf{C}[t]$ in \autoref{sec:fusion-symmetric-power}.% (see e.g.~\cite{MR2271991,MR2047177}).

	\subsection{Demazure modules}	
	
	It is well-known that Demazure modules $V_w(\Lambda)$ associated to $\widehat{\mathfrak{sl}}_2$ carry a $\mathfrak{sl}_2 \otimes \mathbf{C}[t]$-module structure and as such are special instances of fusion modules (see e.g. \cite[\S 1.5.1]{MR2271991} or \cite[\S 3.5]{MR2323538}). To be precise, there are isomorphisms of $\mathfrak{sl}_2 \otimes \mathbf{C}[t]$-modules
		\begin{align}
		\label{eq:graded-factorization-phenomenon}
			V_w (m\Lambda_0 + n\Lambda_1) \cong \begin{cases} \mathcal{F}({\bf 0},\underset{m+1}{1},{\bf 0},\underset{m+n+1}{l(w)-1}) & , w = w' s_0 , \\ \mathcal{F}({\bf 0},\underset{n+1}{1},{\bf 0},\underset{m+n+1}{l(w)-1}) & , w = w' s_1 .\end{cases}
		\end{align}
		Here, we write $l(w)$ for the length of a reduced decomposition of $w \in W^{\mathrm{aff}}$ in the affine Weyl group of $\widehat{\mathfrak{sl}}_2$. Note that the elements of $W^{\mathrm{aff}}$ can be expressed as the following products of the simple reflections $s_0$ (corresponding to the imaginary root) and $s_1$ (corresponding to the simple root $\alpha_1$):
		\[
			(s_1s_0)^N s_1 , N \geq 0 \mbox{ and } (s_0 s_1)^N , (s_1s_0)^N , s_0(s_1s_0)^{N-1}, N > 0.
		\]
  		The characters of Demazure modules can be identified as special instances of fusion modules through a series of translations (multiplication by $z^a$), reflections (evaluation at the reciprocal $1/q$), and rotations (evaluation at mixed monomials $z q^i$), respectively.
		\begin{prp}
		\label{characters-demazure-through-fusion}
			Consider the Demazure module $V_w = V_w(m\Lambda_0 + n\Lambda_1)$ of $\widehat{\mathfrak{sl}}_2$ of fixed highest weight $\Lambda = m\Lambda_0 + n\Lambda_1$ and recall the character formula for fusion modules of the current algebra $\mathfrak{sl}_2 \otimes \mathbf{C}[t]$ \eqref{eq:character-fusion-T-supernomials}. The character $\chi(V_w)(z,q)$, written in the coordinates $z=e^{-\alpha_1}$ and $q = e^{-\delta}$ ($\alpha_1$ is the simple root of $\mathfrak{sl}_2$, $\delta$ denotes the imaginary root), is given by
			\begin{eqnarray}
				&& \mbox{For the trivial element $w = \mathbf{1}$ one has $\chi(V_\mathbf{1})(z,q) = e^\Lambda$.} \\
				%%%
				\label{eq:character-demazure-fusion-1}
				&& \mbox{For $w = (s_1s_0)^N s_1, N \geq 0$ one has}\\
				&& \notag \quad \mbox{$e^{-\Lambda}\chi(V_w)(z,q) = $}\\
				&& \notag \quad \quad \quad \mbox{$z^{-(n+m)N - n/2} q^{N^2 m + N(N+1)n} \chi(\mathcal{F}(0,{\bf L}_{w})(zq^{-2N-1},q)$}\\
				&& \notag \mbox{where ${\bf L}_{w} = (L_1 , \ldots , L_{m+n}) = ({\bf 0},\underset{n}{1},{\bf 0},\underset{m+n}{2N})$.}\\
				%%%
				\label{eq:character-demazure-fusion-2}
				&& \mbox{For $w = (s_0s_1)^N, N \geq 0$ one has}\\
				&& \notag \quad \mbox{$e^{-\Lambda}\chi(V_w)(z,q) = $}\\
				&& \notag \quad \quad \quad \mbox{$z^{-(n+m)N - n/2} q^{N^2 m + N(N+1)n} \chi(\mathcal{F}(0,{\bf L}_{w})(zq^{-2N-1},q)$}\\
				&& \notag \mbox{where ${\bf L}_{w} = (L_1 , \ldots , L_{m+n}) = ({\bf 0},\underset{n}{1},{\bf 0},\underset{m+n}{2N-1})$.}\\
				%%%
				\label{eq:character-demazure-fusion-3}
				&& \mbox{For $w = (s_1s_0)^N, N > 0$ one has}\\
				&& \notag \quad \mbox{$e^{-\Lambda}\chi(V_w)(z,q) = $}\\
				&& \notag \quad \quad \quad \mbox{$z^{-(n+m)N + n/2} q^{N^2 m + N(N-1)n} \chi(\mathcal{F}(0,{\bf L}_{w})(zq^{-2N},q)$}\\
				&& \notag \mbox{where ${\bf L}_{w} = (L_1 , \ldots , L_{m+n}) = ({\bf 0},\underset{m}{1},{\bf 0},\underset{m+n}{2N-1})$.}\\
				%%%
				\label{eq:character-demazure-fusion-4}
				&& \mbox{For $w = s_0(s_1s_0)^{N-1}, N > 0$ one has}\\
				&& \notag \quad \mbox{$e^{-\Lambda}\chi(V_w)(z,q) = $}\\
				&& \notag \quad \quad \quad \mbox{$z^{-(n+m)N + n/2} q^{N^2 m + N(N-1)n} \chi(\mathcal{F}(0,{\bf L}_{w})(zq^{-2N},q)$}\\
				&& \notag \mbox{where ${\bf L}_{w} = (L_1 , \ldots , L_{m+n}) = ({\bf 0},\underset{m}{1},{\bf 0},\underset{m+n}{2N-2})$.}
			\end{eqnarray}
			The sum of the entries in ${\bf L}_{w}$ represents the length $l(w)$ of the Weyl group element $w$. When either $n$ or $m$ equals $0$, then ${\bf L}_{w} = ({\bf 0},l(w))$.
		\end{prp}

		\begin{proof}
			Feigin \cite[(11)]{MR2072647} denotes an integrable highest weight representation $L_{i,k} =
\mathcal{U}(\widehat{\mathfrak{sl}}_2).v_{i,k}$ with highest weight vector $v_{i,k}$ such that $c.v_{i,k} = kv_{i,k}$,
$h_0 . v_{i,k} = i v_{i,k}$, and $d.v_{i,k} = 0$. In our notation, the canonical central element is $c = \alpha_0^\vee +
\alpha_1^\vee$, the coroot is $h_0 = \alpha_0^\vee$, and the scaling element $d$ is given by $\alpha_0(d) =1$ and
$\alpha_1(d)=0$. Therefore, by comparison of the highest weight vector we have $L_{i,k} = V( (k-i)\Lambda_0 + i\Lambda_1
)$. The bigrading is chosen according to the action of $h_0$ and $d$, and consequently, the character is denoted in the
monomials $e^{\alpha_0} = e^{\delta - \alpha_1}$ and $e^{-\delta}$, respectively. By \cite[Corollary 3.1]{MR2072647}
each such module $L_{i,k}$ can be constructed as an inductive limit of fusion products, that is $L_{i,k} =
\mathbf{C}^{i+1} \ast (\mathbf{C}^{k+1})^{2\infty}$. Each fusion product can be identified with the corresponding
Demazure module $V_w( (k-i)\Lambda_0 + i\Lambda_1 )$ by comparing the weights of the extremal weight vectors described in \cite[\S 1]{MR2072647}. Now apply the character formula \cite[Theorem 5.1]{MR1988973}, noting that $e^{\alpha_0} = zq^{-1}$.
		\end{proof}
		
		We can now compare our findings to established results in the literature \cite{bk11flags,bk10law,MR2852488,janson}.

	\subsubsection{The unrestricted one component case}

		Consider $\vL =(0,\ldots,0,N) \in \mathbf{Z}_+^m$ in which case the distribution of $\Yn$ has been investigated as a generalized Galois numbers by Bliem and Kousidis \cite{bk11flags} and later on by Janson \cite{janson}. They studied the random variables $\Yn$ with probability generating function
		\begin{align}
		\label{eq:galois}
			\Exp(q^{\Yn})=\frac{1}{(m+1)^N}\sum_{\substack{ (k_0,\ldots,k_m)\in \mathbb{N}_0^{m+1}\\ k_0+\cdots +k_m=N}} \qbinom{N}{k_0,\ldots,k_m}_q	
		\end{align}
		Note that if we let $B_0=L_m-J_m,B_1=J_m-J_{m-1},\ldots,B_{m-1}=J_2-J_1,B_m=J_1$ it becomes evident that this distribution coincides with the distribution	of $\Yn$ in the one component case. Now, Bliem and Kousidis showed
		\begin{thm}[\protect{\cite[Theorem 3.5]{bk11flags}}]
		\label{thm4}
			Consider the random variables $\Yn$ defined through \eqref{eq:galois}. Then,
			\begin{align*}
				\Exp(\Yn)&=\frac{1}{4}\frac{m}{m+1} N(N-1) , \\
				\var(\Yn)&=\frac{1}{72}\frac{(m+1)^2-1}{(m+1)^2} N(N-1)(2N+5) , \\
				\mbox{ and }\;\;\;
				\frac{\Yn-\Exp(\Yn)}{N^{3/2}}&\lrad \mathcal{N}\big(0,\frac{1}{36}\frac{(m+1)^2-1}{(m+1)^2}\big) .
			\end{align*}
		\end{thm}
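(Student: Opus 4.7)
The plan is to identify $\Yn$ combinatorially with the number of inversions in a uniformly random word, and then run Hoeffding's projection method on a non-symmetric kernel. The $q$-multinomial coefficient $\qbinom{N}{k_0,\ldots,k_m}_q$ is the generating function for inversions over words of length $N$ with letter frequencies $(k_0,\ldots,k_m)$, so
\[
\sum_{k_0+\cdots+k_m=N}\qbinom{N}{k_0,\ldots,k_m}_q \;=\; \sum_{w\in\{0,\ldots,m\}^N} q^{\mathrm{inv}(w)},
\]
and dividing by $(m+1)^N$ puts the uniform law on words. Hence $\Yn \deq \sum_{1\le i<j\le N}\mathbf{1}(X_i>X_j)$ where $X_1,\ldots,X_N$ are i.i.d.\ uniform on $\{0,\ldots,m\}$. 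The mean is then immediate: $\mu:=\Prob(X_1>X_2)=m/(2(m+1))$ by symmetry, so $\Exp(\Yn)=\binom{N}{2}\mu=mN(N-1)/(4(m+1))$.

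For the variance and the CLT I would perform a Hoeffding projection decomposition of the non-symmetric kernel $h(x,y)=\mathbf{1}(x>y)$. Setting $h_1(x)=\Exp h(x,X_2)=x/(m+1)$ and $h_2(y)=\Exp h(X_1,y)=(m-y)/(m+1)$, the centered projections satisfy the key antisymmetry $\tilde h_2=-\tilde h_1$ where $\tilde h_i=h_i-\mu$. This collapses the linear part into a one-dimensional weighted sum:
\[
\Yn-\Exp(\Yn)\;=\;L_N + D_N, \qquad L_N \;=\; \tfrac{1}{2(m+1)}\sum_{i=1}^N (2X_i-m)(N-2i+1),
\]
where $D_N=\sum_{i<j}\bar h(X_i,X_j)$ uses the first-order canonical remainder $\bar h(x,y)=h(x,y)-\mu-\tilde h_1(x)-\tilde h_2(y)$.

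Degeneracy $\Exp[\bar h(X,Y)\mid X]=\Exp[\bar h(X,Y)\mid Y]=0$ forces $\cov(L_N,D_N)=0$ and kills every cross-covariance among the $\bar h(X_i,X_j)$ (whether the index sets are disjoint or share one element), so $\var(\Yn)=\var(L_N)+\binom{N}{2}\Exp(\bar h^2)$. Using $\sum_{i=1}^N(N-2i+1)^2=N(N^2-1)/3$ and $\var(X_1)=m(m+2)/12$ yields $\var(L_N)=\tfrac{(m+1)^2-1}{(m+1)^2}\cdot\tfrac{N(N-1)(N+1)}{36}$, while a short computation based on $\Exp|X_1-X_2|=m(m+2)/(3(m+1))$ gives $\Exp(\bar h^2)=\tfrac{(m+1)^2-1}{12(m+1)^2}$. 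Adding the two contributions and using $2(N+1)+3=2N+5$ collapses to the stated $\tfrac{(m+1)^2-1}{72(m+1)^2}N(N-1)(2N+5)$.

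For the central limit theorem, $L_N$ is a weighted sum of i.i.d.\ bounded random variables whose squared weights sum to $\Theta(N^3)$, so Lyapunov's condition holds trivially and $L_N/N^{3/2}\lrad \mathcal{N}(0,\tfrac{1}{36}\tfrac{(m+1)^2-1}{(m+1)^2})$, whereas $\var(D_N)=O(N^2)$ together with Chebyshev give $D_N/N^{3/2}\to 0$ in probability; Slutsky's lemma finishes the proof. The main obstacle is organizational rather than analytical: because $h$ is non-symmetric, the standard U-statistic CLT does not apply off the shelf, and one must verify by hand that the antisymmetric projection identity truly reduces the ``linear'' part to a weighted sum of independent variables and that the degenerate remainder, though contributing a non-trivial $O(N^2)$ correction to the exact variance, is asymptotically negligible after the $N^{3/2}$ rescaling.
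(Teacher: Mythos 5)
Your proof is correct, and it takes a genuinely different route from the one in the paper. The paper does not reprove this statement from scratch: it quotes it from the literature and then re-derives the expectation and the weak convergence through its mixture machinery, i.e.\ the decomposition $\Yn=\Exp(\Yn|\vJn)+R^{(N)}$ obtained by conditioning on the letter frequencies (the occupancy vector). In that decomposition the conditional-expectation part is a quadratic function of an asymptotically normal multinomial vector which, in this one-component case, degenerates at scale $N^{3/2}$ (the vector $\vc$ of \autoref{cor1} vanishes), so the entire Gaussian limit is carried by the Mann--Whitney ``rest-inversion'' fluctuations within fixed frequency classes. Your Hoeffding decomposition apportions the variance the opposite way: the position-weighted linear projection $L_N=\sum_i(N-2i+1)\tilde h_1(X_i)$ carries all of the asymptotic variance $\tfrac{1}{36}\tfrac{(m+1)^2-1}{(m+1)^2}N^3$, while the completely degenerate second-order term $D_N$ contributes only $O(N^2)$ and dies after rescaling. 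Both decompositions are orthogonal splittings of the same inversion statistic, but they are not the same splitting ($\Exp(\Yn|\vJn)$ is a function of the unordered frequencies, $L_N$ depends on positions), and the checks you flag as the ``main obstacle'' (exact vanishing of all cross-covariances by degeneracy, Lyapunov for the weighted sum) are all sound. What your route buys in addition is the \emph{exact} variance formula $\var(\Yn)=\var(L_N)+\binom{N}{2}\Exp(\bar h^2)$, hence the stated $\tfrac{(m+1)^2-1}{72(m+1)^2}N(N-1)(2N+5)$ with no error term, whereas the paper's asymptotic machinery only recovers the leading $N^3$ coefficient and takes the exact moments on citation; what the paper's route buys is that it generalizes verbatim to the multi-component and restricted cases, where the quadratic occupancy part no longer degenerates and your linear projection alone would not capture the limit.
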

		Janson derived the same result in a variety of ways, proved a corresponding local central limit
theorem, and gave different interpretations of the distribution of $\Yn$. He also
showed joint convergence of $\Yn$ and $\vBn$:
		\begin{thm}[\protect{\cite[Theorem 2.4]{janson}}]
		\label{thm5}
			Let $\vBn$, $\Yn$ be as above. Then,
			\begin{align}
				\label{eq1}
				& \Big(\frac{\Yn-\Exp(\Yn)}{N^{3/2}},\frac{\vBn-\Exp(\vBn)}{N^{1/2}}\Big) \\
				\notag
				& \quad \quad \quad \lrad\Big(\mathcal{N}\big(0,\frac{1}{36}\frac{(m+1)^2-1}{(m+1)^2}\big),\mathcal{N}(\vnu,\Sigma)\Big) ,
			\end{align}
			where the constituents on the right hand side are independent, and the matrix $\Sigma$ is given by $\Sigma_{i,j}=\frac{1}{m+1}(\delta_{i,j}-\frac{1}{m+1})$.
		\end{thm}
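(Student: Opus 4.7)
The plan is to reduce \autoref{thm5} to \autoref{thm2} applied in the one-component case $\vLn=(\mathbf{0},N)$, together with a cancellation of the conditional mean that makes it negligible on the $N^{3/2}$ scale. Throughout write $\va=(0,\ldots,0,1)$, $b_i=i/(m+1)$, and $\vu=(\tfrac{1}{m+1},\ldots,\tfrac{1}{m+1})$, in the notation of \autoref{sec:general-asymptotics}.

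First I would note that for $\vLn=(\mathbf{0},N)$ the underlying occupancy $\vBn=\vB(m)^{(N)}$ is multinomial with parameters $N$ and $\vu$, so \autoref{thm6} gives $(\vBn-N\vu)/\sqrt{N}\lrad\mathcal{N}(\mathbf{0},\Sigma)$ with $\Sigma_{i,j}=\tfrac{1}{m+1}(\delta_{i,j}-\tfrac{1}{m+1})$. Because $L_k^{(N)}=0$ for $k<m$, \autoref{prop2} collapses to $J_i^{(N)}=\sum_{k=m-i+1}^{m} B_k(m)^{(N)}$, a fixed injective linear image of $\vBn$ on the hyperplane $\sum_k B_k=N$. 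Hence $\vJn$ is asymptotically normal jointly with $\vBn$. Feeding this into \autoref{thm2} produces the joint convergence
\[
  \Bigl(\tfrac{R^{(N)}}{N^{3/2}},\,\tfrac{\vJn-\Exp(\vJn)}{\sqrt{N}}\Bigr)\lrad\bigl(R(\va,\vb),\,\mathcal{N}(\mathbf{0},\Sigma_J)\bigr),
\]
with the two constituents \emph{independent}. A direct calculation using $\sum_{i=1}^{m} i(i+1)=m(m+1)(m+2)/3$ gives $v(\va,\vb)=\tfrac{m(m+2)}{36(m+1)^2}=\tfrac{1}{36}\tfrac{(m+1)^2-1}{(m+1)^2}$, matching the target variance in \autoref{thm5}.

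It remains to replace $R^{(N)}$ by $Y^{(N)}-\Exp(Y^{(N)})$ on the $N^{3/2}$ scale. Since $Y^{(N)}=e(\vLn,\vJn)+R^{(N)}$ with $\Exp\,R^{(N)}=0$, this boils down to showing that the centred conditional mean $e(\vLn,\vJn)-\Exp\,e(\vLn,\vJn)$ is $o_P(N^{3/2})$. \autoref{cor1} identifies the asymptotic variance of this fluctuation as $\tfrac14\vc\Sigma_J\vc^t$, where $c_i=a_i+b_{i+1}+b_{i-1}-2b_i$ (with $b_0=b_{m+1}=0$). But $\vb$ is an arithmetic progression, so $b_{i+1}+b_{i-1}-2b_i=0$ for $1\le i\le m-1$, where also $a_i=0$; at the boundary $i=m$ one checks $c_m=1+\tfrac{m-1}{m+1}-\tfrac{2m}{m+1}=0$. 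Thus $\vc=\mathbf{0}$, the quadratic contribution is negligible, and a Slutsky-type argument transfers the joint convergence from $(R^{(N)},\vJn)$ to $(Y^{(N)}-\Exp(Y^{(N)}),\vJn)$; undoing the linear bijection $\vJ\leftrightarrow\vB$ then gives \autoref{thm5}, with independence preserved.

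The main obstacle is the algebraic cancellation $\vc=\mathbf{0}$: without it, the quadratic fluctuation of the conditional mean would contribute a Gaussian component correlated with $\vBn$, destroying the claimed asymptotic independence of the two constituents. This cancellation is peculiar to the uniform-letter, one-component setting (it reflects the arithmetic-progression shape of $\vb$), and it is in fact the probabilistic reason why the variance of $Y^{(N)}$ in \autoref{thm4} agrees with $N^3\,v(\va,\vb)$ to leading order, with no $\vc\Sigma\vc^t$ contribution.
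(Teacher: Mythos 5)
Your proposal is correct and follows essentially the same route as the paper: the paper likewise specializes \autoref{thm2} and \autoref{cor1} to $\vLn=({\bf 0},N)$, computes $b_i=i/(m+1)$ and $v(\va,\vb)=\tfrac{1}{36}\tfrac{m(m+2)}{(m+1)^2}$, and observes that $\vc=\vnu$, so that $\big(e(\vLn,\vJn)-d^{(N)}\big)/N^{3/2}\lrad\delta_0$ and the conditional-mean fluctuation contributes nothing, leaving the independent pair from \autoref{thm2}. Your identification of the cancellation $\vc=\mathbf{0}$ (from the arithmetic-progression shape of $\vb$) as the crux, and the remark that this is what preserves asymptotic independence, matches the paper's reasoning exactly.
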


		Let us compare these results to our findings above. We obtain from \autoref{lem5}:
		\[
			\Exp(\Yn)=\frac{1}{4}\frac{m}{m+1}\,N^2 -\frac{1}{4}\frac{m}{m+1}N ,
		\]
		which agrees with the expectation given in \autoref{thm4}. Further, we have
		\[		
			\Exp(\Jn_i)=i\frac {N}{m+1}, \;\;\;b_i=\frac{i}{m+1},\;\;\;  a_0=\ldots=a_{m-1}=0,\,a_m=1 ,
		\]
		and find that
		\[
			\Cn=\frac{m(m+1)}{4}\,N^2 ,
		\]
		and
		\[
			v(\va,\vb)=\frac{1}{12(m+1)^3}\sum_{i=1}^{m}i(i+1)=\frac{1}{36}\frac{m(m+2)}{(m+1)^2} .
		\]
		Finally, $\vc=\vnu$, where $\vc=\vc(\va,\vb)$ is as in \autoref{cor1}. Thus, by \autoref{cor1} we have
		\[		
			\frac{\Yn-\Cn}{N^{3/2}}\lrad\mathcal{N}\big(0,\frac{1}{36}\frac{m(m+2)}{(m+1)^2}\big) ,
		\]
		which is equivalent to the weak convergence assertion in \autoref{thm4}, and hence establishes an independent proof. Moreover, by \autoref{cor1}
		\[
			\frac{e(\vLn,\vJn)-\Cn}{N^{3/2}}\lrad\mathcal{N}(0,0)=\delta_0 ,
		\]
		that together with \autoref{thm1} independently proves Janson's \autoref{thm5}.

		Note that \autoref{thm5} as it stands does not generalize to more general distributions.  As an example let $\vBn$ be multinomial with parameters $N,\vp$ where $\vp$ is not uniform. Here we get from \autoref{cor1} that 
		\[
			\frac{e(\vLn,\vJn)-\Cn}{N^{3/2}}\lrad\mathcal{N}(0,v_1(\vp)) ,
		\]
		where $v_1(\vp)=\frac{1}{4}\big(\sum_{i=0}^m p_i^3 -(\sum_{i=0}^mp_i^2)^2\big)$. The corresponding joint limiting distribution (on the right hand side of \eqref{eq1}) is  normal, but the constituents are not independent.
		
	\subsubsection{The unrestricted two component case}
	\label{sec:comparison-demazure}
		
		For the two component case of $\vL = (L_1,\ldots,L_m)$, i.e.~$L_m=M$, $L_k=K$ for a $k<m$, and all other $L_i = 0$, we find
		\begin{align*}
			\Exp(T_\vL)&= \frac{1}{12}\frac{m(4m+5)}{m+1}\,M^2 +\frac{1}{12}\frac{m(2m+1)}{m+1}\,M
					+\frac{1}{12}\frac{k(4k+5)}{k+1}\,K^2 \\
				&  \quad +\frac{1}{12}\frac{k(2k+1)}{k+1}\,K +\frac{1}{2} m\, KM + \frac{1}{6}\frac{k(k+2)}{m+1}\,KM .
		\end{align*}
		For $K=1$ this simplifies to
		\begin{align*}
			\Exp(T_\vL)&= \frac{1}{12}\frac{m(4m+5)}{m+1}\,M^2 +\frac{1}{12}\frac{m(8m+7)+2k(k+2)}{m+1}\,M+\frac{k}{2} .
		\end{align*}
		Let us interpret this in terms of the Demazure modules $V_w(m \Lambda_0 + n \Lambda_1)$ from \eqref{eq:graded-factorization-phenomenon}. The random variables $X_w$ having probability generating function the basic specialization of the character $\chi(V_w(\Lambda))$ are given due to \autoref{characters-demazure-through-fusion} by translations and rotations (averaging over the random variable $S_{\vL_w}$) as follows
		\begin{align}
			& \mbox{Equations \eqref{eq:character-demazure-fusion-1} and \eqref{eq:character-demazure-fusion-2} read as:}\\
			\notag & \quad \quad \mbox{$X_w = N^2m +N(N+1)n + (-2 N-1)\cdot S_{\vL_w} + T_{\vL_w}$.}\\
			\label{eq:random-trans-rot-2}
			& \mbox{Equations \eqref{eq:character-demazure-fusion-3} and \eqref{eq:character-demazure-fusion-4} read as:}\\
			\notag & \quad \quad \mbox{$X_w = N^2m +N(N-1)n - 2 N \cdot S_{\vL_w} + T_{\vL_w}$.}
		\end{align}
		The cases covered here correspond to the cases found in \cite[Theorem 4.1]{MR2852488}. Let us
restrict for simplicity reasons to \eqref{eq:random-trans-rot-2} for $w=(s_1 s_0)^N$, and compare our findings to
\cite[Theorem 4.1]{MR2852488}, where the corresponding case is \cite[(4.1)]{MR2852488} for even $N$.
		\begin{thm}[\protect{\cite[(4.1) in Theorem 4.1]{MR2852488}}]
			For $\vL$ with entries $0$ except $L_m=1$, $L_{m+n}=2N-1$, and with $U=2N-1$, $u=m+n$ one has
			\begin{align}	
				\label{bk-result} & \Exp(N^2m+N(N-1)n-2S_\vL N+T_\vL)  \\
				\notag & \quad \quad =\frac{2Um(m+2)+U(U-1)u(u+2)}{12(u+1)}+\frac{U-1}{2}\frac{u}{2}+\frac{m}{2} .
			\end{align}
		\end{thm}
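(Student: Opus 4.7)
The plan is to verify this identity by direct computation: apply linearity of expectation, invoke the formulas that have already been established for the various moments appearing on the left-hand side, and then simplify algebraically until the right-hand side falls out. There is no probabilistic content beyond what has already been proved; the assertion is essentially an exact-mean computation specialized to the two-component admission vector $\vL = ({\bf 0},\underset{m}{1},{\bf 0},\underset{u}{U})$ with $u = m+n$ and $U = 2N-1$.

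First I would use linearity of expectation to write
\[
  \Exp(N^2 m + N(N-1)n - 2N S_\vL + T_\vL) = N^2 m + N(N-1)n - 2N\,\Exp(S_\vL) + \Exp(T_\vL),
\]
and then substitute $\Exp(S_\vL) = \tfrac12(m L_m + u L_u) = \tfrac12(m + uU)$ from \eqref{eq:mean-tensor}. For $\Exp(T_\vL) = \Exp(Y) + \Exp(Q(\vL,\vJ))$ I would invoke Lemma~\ref{lem5}, specialized to the present $\vL$. Because only $L_m$ and $L_u$ are non-zero, the sequences $s_i = \sum_{k=i}^{u} L_k/(k+1)$ and $t_i = \sum_{k=1}^{i-1} L_k$ take only a handful of values: $s_i = \tfrac{1}{m+1} + \tfrac{U}{u+1}$ for $1 \le i \le m$, $s_i = \tfrac{U}{u+1}$ for $m < i \le u$; and $t_i = 0$ for $i \le m$, $t_i = 1$ for $m < i \le u$. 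The single sums appearing in Lemma~\ref{lem5} collapse to $\sum_i i s_i^2$, $\sum_i i^2 s_i^2$, $\sum_i i s_i t_i$, each of which splits as a closed-form piece over $\{1,\dots,m\}$ plus one over $\{m+1,\dots,u\}$ and evaluates using the standard closed forms $\sum_{i=1}^r i = r(r+1)/2$ and $\sum_{i=1}^r i^2 = r(r+1)(2r+1)/6$.

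Assembling all the pieces yields an expression polynomial in $m$, $u$, $U$ and $N$. The dependence on $N$ enters only through the terms $N^2 m + N(N-1)n - N(m + uU)$ coming from the translation/rotation, so after substituting $n = u - m$ and $2N = U+1$ these collect into a clean remainder that combines with the $U$-dependence coming from $\Exp(T_\vL)$. The last step is algebraic simplification: group the $U^2$-terms, the $U$-terms, and the $N$-independent residue, and check that the total equals $\tfrac{2Um(m+2) + U(U-1)u(u+2)}{12(u+1)} + \tfrac{(U-1)u}{4} + \tfrac{m}{2}$.

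The main obstacle is not conceptual but bookkeeping: keeping track of the two regimes $i \le m$ and $m < i \le u$ in every sum, and then carrying out the cancellations that convert the raw output of Lemma~\ref{lem5} plus the translation/rotation correction into the compact right-hand side of \eqref{bk-result}. Once the specializations of $s_i$ and $t_i$ are written down, the remainder is a finite algebraic identity that can be verified termwise, thereby reproducing the formula of \cite[(4.1)]{MR2852488} independently of Demazure's recursion.
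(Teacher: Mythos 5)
Your proposal is correct and follows essentially the same route as the paper: linearity of expectation, $\Exp(S_\vL)=\tfrac12(m+uU)$ from \eqref{eq:mean-tensor}, and $\Exp(T_\vL)=\Exp(Y)+\Exp(Q(\vL,\vJ))$ via Lemma~\ref{lem5} specialized to the two nonzero entries $L_m=1$, $L_u=U$, followed by algebraic simplification. The paper merely packages the Lemma~\ref{lem5} computation into the previously displayed two-component formula for $\Exp(T_\vL)$ before plugging in, which is exactly the bookkeeping you describe.
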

		\begin{proof}
		We can establish \eqref{bk-result} by the computation of the left-hand side through the linearity of
$\Exp (.)$ and the mean of the random variables $S_\vL$ and $T_\vL$. That is
			\[		
				\Exp(T_\vL)=\frac{1}{12}\frac{u(4u+5)}{u+1}\,U^2 +\frac{1}{12}\frac{u(8u+7)+2m(m+2)}{u+1}\,U+\frac{m}{2} ,
			\]
			and $\Exp(S_\vL)=\frac{1}{2} (mL_m+uL_u)=\frac{1}{2}(m+uU)=\frac{1}{2}(2uN-n)$ which gives
			\[		
				\Exp(N^2m+N(N-1)n-2S_\vL N)=-N^2u=-\frac{(U+1)^2}{4}u .
				\qedhere
			\]
		\end{proof}

		Finally, we settle a question that was posed by Bliem and Kousidis in \cite{bk10law}.
		\begin{lem}[\protect{Cf.~\cite[Conjecture 8.3]{bk10law}}]
		\label{asymptotic-concentration-demazure}
			Fix a dominant integral weight $\Lambda$ and a sequence $(w_N)$ in $W^{\mathrm{aff}}$ such that $l(w_N) \to \infty$. Let $\mu_N$ be the joint distribution of the degree and the finite weight in $V_{w_N}(\Lambda)$. Let $\tilde\mu_N$ be the distribution obtained from $\mu_N$ by normalizing to a probability distribution and rescaling the two coordinates individually so that $\supp(\tilde\mu_N)$ just fits into the rectangle $[0,1] \times [-1,1]$.
		Then, as $N \to \infty$,
		\[
			\tilde\mu_N \weakto \delta_{\left( \frac{\langle c, \Lambda \rangle +2}{3(\langle c, \Lambda \rangle+1)}, 0 \right)} ,
		\]
		where $c = \alpha_0^\vee + \alpha_1^\vee$ denotes the canonical central element.
		\end{lem}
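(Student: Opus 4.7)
The plan is to translate the concentration statement for $\tilde\mu^{(k)}$ into an asymptotic statement for the joint distribution of the finite-weight exponent $a$ (the $z$-exponent) and the degree exponent $k_0$ (the $q$-exponent) in the monomials $z^a q^{k_0}$ of $e^{-\Lambda}\chi(V_{w^{(k)}}(\Lambda))$, and then to invoke \autoref{thm:central-limit-basic-spec-fusion} together with an explicit computation of the support endpoints. Since every non-trivial element of $W^{\mathrm{aff}}$ for $\widehat{\mathfrak{sl}}_2$ belongs to one of the four families listed before \autoref{characters-demazure-through-fusion}, and since in each case the associated admission vector $\vL_{w^{(k)}}$ has exactly two non-zero entries, namely $1$ at index $m$ (or $n$) and $2k + O(1)$ at index $m+n$, one has $\tfrac{1}{k}\vL_{w^{(k)}} \to \va$ with $a_i = 2\delta_{i,m+n}$ uniformly across the four cases. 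The lower-order bookkeeping differences contribute only $O(k)$ corrections to means and support widths, so it suffices to handle, say, $w^{(k)} = (s_1 s_0)^k$; write $N := k$ in what follows.

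By \autoref{characters-demazure-through-fusion}, the random vector $(a, k_0)$ equals, up to $O(1)$ shifts,
\[
\bigl(S_{\vL_w} - (m+n)N + O(1),\; N^2 m + N(N-1)n - 2N\,S_{\vL_w} + T_{\vL_w}\bigr),
\]
where $S_{\vL_w}$ is the total word sum and $T_{\vL_w}$ is the $q$-exponent random variable of \autoref{sec:distributions}. The classical central limit theorem for sums of independent uniform variables gives $\Exp(S_{\vL_w}) = (m+n)N + O(1)$ and $\var(S_{\vL_w}) = O(N)$, so $\Exp(a) = O(1)$ and $\var(a) = O(N)$. Invoking \autoref{thm:central-limit-basic-spec-fusion} and \autoref{lem5} with the above $\va$ yields
\[
\Exp(T_{\vL_w}) = N^2 \cdot \frac{(m+n)(4(m+n)+5)}{3(m+n+1)} + O(N), \quad \var(T_{\vL_w}) = O(N^3),
\]
and after simplification $\Exp(k_0) = N^2(m+n) \cdot \frac{(m+n)+2}{3((m+n)+1)} + O(N)$ and $\var(k_0) = O(N^3)$.

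For the supports, the finite-weight coordinate $a$ ranges between $\pm (m+n)N + O(1)$ symmetrically, since the $z$-support of $\chi(\mathcal{F}(0,\vL_w))(z,1)$ runs from $0$ to $\sum_k k L_k = 2(m+n)N + O(1)$ and is symmetric around its midpoint. The degree coordinate $k_0$ has support $[0, k_0^{\max}]$, where $k_0^{\max}$ is the $\alpha_0$-depth of the extremal weight $w(\Lambda) = (s_1 s_0)^N(\Lambda)$. Using the standard identification $(s_1 s_0)^N = t_{-N\alpha_1^\vee}$ in $W^{\mathrm{aff}}$ and Kac's translation formula, a direct computation gives $k_0^{\max} = N^2(m+n) + O(N)$.

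Rescaling the finite-weight coordinate by $1/((m+n)N)$ and the degree coordinate by $1/(N^2(m+n))$ sends the supports into $[-1,1]$ and $[0,1]$, respectively, carries the means to $0$ and $\frac{(m+n)+2}{3((m+n)+1)}$, and reduces the standard deviations to $O(1/\sqrt{N})$ in both coordinates. Chebyshev's inequality then forces $\tilde\mu^{(N)} \weakto \delta_{\left(\frac{(m+n)+2}{3((m+n)+1)},\,0\right)}$, and substituting $\langle c,\Lambda\rangle = m+n$ gives the claimed formula. The main technical step is the extraction of the leading term $N^2(m+n)$ of $k_0^{\max}$ from the translation representation of $(s_1 s_0)^N$, which requires care with the invariant form on $\hat{\mathfrak{h}}^*$; the analogous verification for the three other families in \autoref{characters-demazure-through-fusion} is largely routine bookkeeping but must be carried out in order to cover arbitrary sequences $(w^{(k)})$ with $l(w^{(k)}) \to \infty$.
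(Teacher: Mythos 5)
Your proposal is correct and follows essentially the same route as the paper: both write the degree coordinate as $d_N - 2N S_{\mathbf{L}_w} + T_{\mathbf{L}_w}$ via \autoref{characters-demazure-through-fusion}, extract the leading term $N^2 u\,\tfrac{u+2}{3(u+1)}$ of its mean from the expectation formulas for $S$ and $T$, observe that the fluctuations are of lower order than the rescaling factors $d_N \sim N^2 u$ and $(m+n)N$, and conclude concentration. The only cosmetic differences are that the paper disposes of the finite-weight coordinate by citing the factorization phenomenon \eqref{eq:factorization-phenomenon} rather than computing $\Exp(a)$ and $\var(a)$ directly, and reads off $d_N = N^2m + N(N-1)n$ from the character formula instead of re-deriving it from the translation representation of $(s_1s_0)^N$.
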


		\begin{proof}
			We consider only the Demazure modules $V_{(s_1 s_0)^N}(m\Lambda_0+n\Lambda_1)$ as the other
cases can be derived similarly. Let $r_N$ denote the maximal degree in these Demazure modules, i.e.~$r_N =
N^2m+N(N-1)n$. Let $u=m+n = \langle c, \Lambda \rangle$ denote the level of the representation, and consider the random
variable with probability generating function given by the basic specialization of our Demazure module, that is
			\begin{align*}
				X_N & = r_N - 2N S_{\vLn} + \Tn \\
					& = \Exp\big(r_N - 2N \Exp(S_{\vLn})+\Tn\big) -2N(S_{\vLn}-\Exp(S_{\vLn}))+(\Tn-\Exp(\Tn))
			\end{align*}
			The probability distribution of $X_N$ and $\tfrac{1}{r_N} X_N$ is the first coordinate of $\mu_N$ and $\tilde\mu_N$ for the Weyl group element $w_N = (s_1 s_0)^N$, respectively. Now, equivalent to the asserted weak convergence of $\tilde\mu_N$ we have
			\begin{align*}
				\frac{X_N}{r_N} \lrad \frac{u+2}{3(u+1)} ,
			\end{align*}
			since
			\begin{align*}
				\frac{\Exp\big(r_N - 2N S_{\vLn}+\Tn\big)}{r_N} & \ra \frac{1}{3}\frac{4u+5}{u+1}-1=\frac{u+2}{3(u+1)} ,
			\end{align*}
			and by \eqref{eq:variance-tensor} and \autoref{cor1} we have the convergences in distribution
			\begin{align*}
				\frac{S_{\vLn}-\Exp(S_{\vLn})}{N} & \lrad 0 , \quad \mbox{ and } \quad  \frac{\Tn-\Exp(\Tn)}{N^2} \lrad 0 .
			\end{align*}
			Since it is well known that the second coordinate of $\tilde\mu_N$ concentrates in $0$, the claim follows.
		\end{proof}
	
	\section{Fusion of symmetric power representations}
	\label{sec:fusion-symmetric-power}

		The Kostka numbers are the coefficients in the expansion
		\begin{align}
		\label{eq:schur-monomial}
			 \prod_i h_{\xi_i}({\bf x}) = \sum_{\xi} K_{\eta,\xi} \cdot s_{\eta}({\bf x}) 
		\end{align}
		of the product of complete symmetric functions $h_{\xi_i}$ in terms of the Schur functions $s_{\eta}$.
		The Kostka polynomials $K_{\eta,\mu}(q)$ generalize the Kostka numbers in the sense that $K_{\eta,\mu}(1) = K_{\eta,\mu}$. They give the transition matrix between the Schur function $s_{\eta}$ and Hall-Littlewood function $P_{\mu}$, i.e.
		\begin{align}
		\label{eq:eq:halllittlewood-schur}
			s_{\eta}({\bf x}) = \sum_{\mu} K_{\eta,\mu}(q) \cdot P_{\mu}({\bf x},q)   .
		\end{align}
		A standard reference for the above functions is \cite{0824.05059}.

		Now, the $q$-supernomial $S_{\xi,\mu}(q)$ \cite{MR1663325,MR1768934,MR1903985,MR1665322,MR1690046} is defined as the combination of \eqref{eq:schur-monomial} and \eqref{eq:eq:halllittlewood-schur}, i.e.~as the transition between the above product of complete symmetric functions and Hall-Littlewood functions
		\begin{align}
		\label{eq:general-q-supernomial}
			S_{\xi,\mu}(q) = \sum_{\eta} K_{\eta,\xi} \cdot  K_{\eta,\mu}(q) .
		\end{align}
		An explicit form of $S_{\xi,\mu}(q)$ is proven in \cite[Proposition 5.1]{MR1663325}, where $\mu = (\mu_1 , \ldots , \mu_m)$ is a partition and $\xi \in \mathbf{Z}_+^n$ a composition such that $|\mu| = |\xi| = M$, as
		\begin{align}
		\label{eq:general-q-supernomial-explicit}
			S_{\xi,\mu}(q) = \sum_{\{\nu\}} q^{\phi(\{\nu\})} \prod_{\substack{ 1 \leq a \leq n-1 \\ 1 \leq i \leq \mu_1 \ }} \qbinom{\nu_i^{(a+1)} - \nu_{i+1}^{(a)}}{\nu_i^{(a)} - \nu_{i+1}^{(a)}}_q
			,
		\end{align}
		with
		\[
			\phi(\{\nu\}) = \sum_{a=0}^{n-1} \sum_{i=1}^{\mu_1} \binom{\nu_i^{(a+1)} - \nu_i^{(a)}}{2}
			,
		\]
		and where the sum $\sum_{\{\nu\}}$ is indexed over the sequences of Young diagrams $\nu^{(1)},\ldots,\nu^{(n-1)}$ such that
		\begin{align*}
			& \emptyset \subset \nu^{(0)} \subset \nu^{(1)} \subset \cdots \subset \nu^{(n-1)} \subset \nu^{(n)} = \mu^t , \\
			& |\nu^{(a)}| = \xi_1 + \cdots + \xi_a \mbox{ for } 1 \leq a \leq n-1 .
		\end{align*}
		
		\begin{rem}
		\label{rem:keinelust}
			For $n=2$ and arbitrary $\mu$, \eqref{eq:general-q-supernomial} agrees with the definition of
$q$-supernomials as given by Schilling and Warnaar \cite{MR1665322}. See \cite[\S 3.1]{MR1768934} for a detailed
discussion.
		\end{rem}

		We define a slight variant of the above $q$-supernomials, which describes the string functions in the fusion product of $\mathfrak{sl}_{r+1}$ symmetric power representations $\mathcal{F}_\mu = V_{\mu_1 \omega_1} \ast V_{\mu_2 \omega_1} \ast \ldots \ast V_{\mu_m \omega_1}$. That is,
		\begin{align}
		\label{eq:general-q-supernomial-variant}
			S^\ast_{\xi,\mu}(q) = q^{n(\mu)} S_{\xi,\mu}(q^{-1}) = q^{n(\mu)} \sum_{\eta} K_{\eta,\xi} \cdot K_{\eta,\mu}(q^{-1}) ,
		\end{align}
		where for the partition $\mu = (\mu_1 , \ldots , \mu_m)$ we set $n(\mu)$ (Cf. \cite[(3.10)]{MR2047177}, \cite[\S 2.1]{0982.05105}, \cite[\S 2.1]{MR1665322}) to be the normalization constant
		\begin{align}
		\label{eq:the-mysterious-n-mu}
			n(\mu) = \sum_{i=1}^m (i-1) \mu_i = \sum_{1\leq i<j\leq m} \min (\mu_i,\mu_j) .
		\end{align}
		Note that this normalization ensures that $q^{n(\mu)}K_{\eta,\mu}(q^{-1})$ is a polynomial in $q$.

		Then, we have a fermionic formula (a positive sum of products of $q$-binomial coefficients) for the graded character of the above fusion product $\mathcal{F}_\mu$. That is,
		\begin{prp}
		\label{prp:string-functions-general}
			Let $\mu = (\mu_1 , \ldots , \mu_m)$ be a partition of $M$. Then,
			\[
			\chi (\mathcal{F}_\mu) = \sum_{\xi \text{ weight}} S^\ast_{\xi,\mu}(q) \cdot m_\xi .
			\]
		\end{prp}
		\begin{proof}
			Let $m_\xi$ denote the monomial symmetric functions. Then, with $\widetilde{K}_{\eta,\mu}(q) =  q^{n(\mu)} K_{\eta,\mu}(q^{-1})$ where $n(\mu) = \sum_i (i-1) \mu_i$ as in \cite[(3.10)]{MR2047177} one has \cite[Corollary 7.6]{MR2047177}:
			\begin{align*}
				\chi (\mathcal{F}_\mu) & = \sum_{\eta~\vdash M} \chi (\pi_\eta) \cdot \widetilde{K}_{\eta,\mu}(q) \\
					& = \sum_{\eta~\vdash M} s_\eta \cdot \widetilde{K}_{\eta,\mu}(q) \\
					& = \sum_{\eta~\vdash M} \bigg( \sum_{\xi \text{ weight}} K_{\eta,\xi} \cdot m_\xi \bigg) \cdot \widetilde{K}_{\eta,\mu}(q) \\
					& = \sum_{\xi \text{ weight}} \bigg( \sum_{\eta~\vdash M} K_{\eta,\xi} \cdot \widetilde{K}_{\eta,\mu}(q) \bigg) \cdot m_{\xi} \\
					& = \sum_{\xi \text{ weight}} S^\ast_{\xi,\mu}(q) \cdot m_{\xi} 
			\end{align*}
			Note that all partitions except $\mu$ have at most $r$ entries, corresponding to the rank of the Lie algebra.
		\end{proof}

		\begin{rem}
			For the graded character of fusion of fundamental representations $\ast_j V(\omega_{i_j})$, Chari and Loktev prove an equivalent fermionic formula \cite[Proposition 2.1.4]{MR2271991}.
		\end{rem}

		\begin{rem}[Cf.~\cite{bk11flags,janson,k11galoiskacmoody,v10}]
		\label{rem:flags}
			Kirillov \cite{MR1768934,0982.05105} is a great source of various combinatorial, geometric and statistical interpretations of $q$-supernomials $S_{\xi,\mu}(q)$. Let us shortly remark on the geometric one. As pointed out by Kirillov \cite[\S 1.4]{MR1768934} it has been proven by Shimomura \cite{0413.20037} that the $q$-supernomials count the number of rational points $\flag_\xi^\mu (\mathbf{F}_q)$ over the finite field $\mathbf{F}_q$ of the unipotent partial flag variety $\flag_\xi^\mu$.
			To be precise, for a composition $\xi \in \mathbf{Z}_+^r$ of $n$, a $\xi$-flag in a $n$-dimensional vector space $V$ is a sequence $V_1 \subset \cdots \subset V_r$ such that $\dim V_i = \xi_1 + \cdots + \xi_i$. The set of all such flags is the partial flag variety $\flag_\xi$. We let $\flag_\xi^\mu \subset \flag_\xi$ be the subset of the partial flag variety $\flag_\xi$ consisting of the set of all $\xi$-flags $F \in \flag_\xi$ fixed by a unipotent endomorphism $u \in \mathrm{Gl}(V)$ of type $\mu$ (a partition of $n$ that describes the Jordan canonical form of $u$). Then, $\flag_\xi^\mu$ is a closed subvariety of $\flag_\xi$, the so-called unipotent partial flag variety. Now, Shimomura \cite{0413.20037} proves that the $q$-supernomials count the number of $\mathbf{F}_q$-rational points in $\flag_\xi^\mu$. That is, with $n(\mu)$ as in \eqref{eq:the-mysterious-n-mu} one has
			\begin{align}
				\# \flag_\xi^\mu (\mathbf{F}_q) = q^{n(\mu)} S_{\xi,\mu}(q^{-1}) = S^\ast_{\xi,\mu}(q).
			\end{align}
			In particular, the basic specialization of the fusion module $\mathcal{F}_\mu$ gives the number of $\mathbf{F}_q$-rational points in $\coprod_\xi \flag_\xi^\mu$, 
			\begin{align}
				\chi (\mathcal{F}_\mu)(q) = \sum_\xi S^\ast_{\xi,\mu}(q) = \sum_\xi \# \flag_\xi^\mu (\mathbf{F}_q) .
			\end{align}
		\end{rem}
		
		Our \autoref{prp:string-functions-general} exhibits the objects that have to be analyzed in order to
establish a general central limit theorem along the same lines as \autoref{thm:central-limit-basic-spec-fusion}. The
explicit expression \eqref{eq:general-q-supernomial-variant} shows that one can interpret the $q$-supernomials again as
mixtures of probability distributions. For an Ansatz let
		\begin{align*}
		  f_{\mu,\eta}(q) & = \frac{q^{n(\mu)} K_{\eta,\mu}(q^{-1})}{K_{\eta,\mu}} , \\
		  \Prob (X_{\mu,\xi} = \eta) & = \frac{K_{\eta,\xi}K_{\eta,\mu}}{\sum_{\eta} K_{\eta,\xi}K_{\eta,\mu}} .
		\end{align*}
		Here, $f_{\mu,\eta}(q)$ would immitate the inversion statistic, and $X_{\mu,\xi}$ the mixture
distribution. It should be straightforward to check the reductions to the distributions investigated in
\autoref{sec:distributions} in the case of $q$-supernomials as defined by Schilling and Warnaar (see
\autoref{rem:keinelust}). We pose a conjecture for further research.
	\begin{cnj}
	\label{cnj:fusion}
	  Consider the sequence of fusion modules of symmetric power representations for the current algebra $\mathfrak{sl}_r \otimes \mathbf{C}[t]$
	  \[\mathcal{F}_{\mu^N} = V_{\omega_1}^{\ast \Ln_1} \ast V_{2 \omega_1}^{\ast \Ln_2} \ast
\ldots \ast V_{m \omega_1}^{\ast \Ln_m}, \]
	  associated to the partition
	  $\mu^N = (1^{\Ln_1},2^{\Ln_2}, \ldots ,
m^{\Ln_m})$
	  with $\Ln_i$-many $i$'s. Assume that as $N \ra \infty$ we have
	  \[\frac 1N (\Ln_1,\Ln_2, \ldots , \Ln_m) \ra \va \neq 0. \]
	  Then, the central string functions and the basic specialization of the character $\chi(\mathcal{F}_{\mu^N})$ behaves
asymptotically normal as $N \ra \infty$.
	\end{cnj}	
	
	\section{Acknowledgements}
	
	\noindent The first author would like to thank Evgeny Feigin for helpful discussions.

	\bibliographystyle{amsplain}
	\bibliography{qsupernomials}
	
\end{document}